\newtheorem{theorem}{Theorem}[section]
\newtheorem{definition}[theorem]{Definition}
\newtheorem{lemma}[theorem]{Lemma}
\newtheorem{example}[theorem]{Example}
\numberwithin{equation}{section}
\def \bn {\hfill \\ \smallskip\noindent}
\def\proof{\bn {\bf Proof.} }
\newcommand{\tr}{{\rm Tr\, }}
\newcommand{\qed}{\hspace*{\fill} $\Box$ \\ \medskip}
\newcommand{\dJN}{\delta^{(r)}_{\KPM}}
\newcommand{\dJr}{\delta^{(r)}_{\KPM}}
\newcommand{\oN}{{\mathbb N}}
\newcommand{\oR}{{\mathbb R}}
\newcommand{\Supp}{\text{\rm Supp}}
\newcommand{\bK}{{\mathbf K}}
\newcommand{\fmin}{f_{\min}}
\newcommand{\KPM}{\text{KPM}}
\newcommand{\ignore}[1]{}
\begin{document}
\thispagestyle{empty}

\title{Improved convergence rates for Lasserre-type hierarchies of upper bounds for box-constrained polynomial optimization}
\author{Etienne de Klerk \\
              Tilburg University \\
              PO Box 90153, 5000 LE Tilburg, The Netherlands \\
              \texttt{E.deKlerk@uvt.nl}
           \and
           Roxana Hess\footnote{Most of this work was done while the second author was staying at CWI in autumn 2015. She would like to thank CWI and, in particular M. Laurent for the hospitality and support during her stay, and Universit\'e Paul Sabatier, \'Ecole Doctorale Syst\`emes and \'Ecole des Docteurs de l'Universit\'e F\'ed\'ederale Toulouse Midi-Pyr\'en\'ees for the funding.}\\ 
     							LAAS-CNRS, Universit\'e de Toulouse\\
							LAAS, 7 avenue du colonel Roche, 31400 Toulouse, France\\
							\texttt{rhess@laas.fr}
           \and
           Monique Laurent \\
              Centrum Wiskunde \& Informatica (CWI), Amsterdam and Tilburg University \\
              CWI, Postbus 94079, 1090 GB Amsterdam, The Netherlands\\
              \texttt{M.Laurent@cwi.nl}
}
\renewcommand\Affilfont{\small}
\setlength{\affilsep}{0.2cm}


\ignore{
\institute{Etienne de Klerk \at
              Tilburg University \at
              PO Box 90153, 5000 LE Tilburg, The Netherlands \\
              \email{E.deKlerk@uvt.nl}
              \and
              Roaxana Hess \at ??
           \and
           Monique Laurent \at
              Centrum Wiskunde \& Informatica (CWI), Amsterdam and Tilburg University \at
              CWI, Postbus 94079, 1090 GB Amsterdam, The Netherlands\\
              \email{M.Laurent@cwi.nl}
}
}

\maketitle

\begin{abstract}
We consider the problem of minimizing a given $n$-variate polynomial $f$ over the hypercube $[-1,1]^n$.
An idea introduced by Lasserre, is to find a probability distribution on $[-1,1]^n$ with polynomial density function $h$ (of given degree $r$) that
minimizes the expectation $\int_{[-1,1]^n} f(x)h(x)d\mu(x)$, where $d\mu(x)$ is a fixed, finite Borel measure supported on $[-1,1]^n$.
It is known that, for the Lebesgue measure $d\mu(x) = dx$, one may show an error bound $O(1/\sqrt{r})$ if $h$ is  a sum-of-squares density, and
an $O(1/r)$ error bound if $h$ is the density of a beta distribution.
In this paper, we show an error bound of $O(1/r^2)$, if $d\mu(x) = \left(\prod_{i=1}^n \sqrt{1-x_i^2} \right)^{-1}dx$ (the well-known measure in the
study of orthogonal polynomials), and $h$ has a Schm\"udgen-type representation with respect to $[-1,1]^n$, which is a more general condition than
a sum of squares.
The convergence rate analysis relies on the theory of polynomial kernels and, in particular, on Jackson kernels.
We also show that the resulting upper bounds may be computed as generalized eigenvalue problems, as is also the case for sum-of-squares densities.
\end{abstract}

\noindent
{\bf Keywords:} box-constrained global optimization, polynomial optimization, Jackson kernel, semidefinite programming, generalized eigenvalue problem, sum-of-squares polynomial

\noindent
{\bf AMS classification:} 90C60, 90C56, 90C26.

\section{Introduction}

\subsection{Background results}
We consider the problem of minimizing a given $n$-variate polynomial $f \in \mathbb{R}[x]$
over the compact set $\bK=[-1,1]^n$, i.e., computing the parameter
\begin{eqnarray}\label{eqfmin}
f_{\min} &=& \min_{x \in \bK} f(x).
\end{eqnarray}
This is a hard optimization problem which contains, e.g.,   the well-known NP-hard maximum stable set and maximum cut problems  in graphs
 (see, e.g., \cite{PH11,PH13}).
It  falls within box-constrained  (aka  bound-constrained)  optimization which has been widely studied in the literature.
In particular  iterative
methods for bound-constrained optimization are described in the books \cite{Bertsekas1996,Fletcher_book,Gill_Murray_Wright}, including projected gradient and active set methods.
The
latest algorithmic developments for box-constrained global optimization are surveyed in
the recent thesis \cite{Palthesis}; see also \cite{Hager_Zhang} and the references therein for  recent work on active set methods, and a list of applications. 
The box-constrained optimization problem is even of practical interest in the (polynomially solvable) case where $f$ is a convex quadratic problem, 
and dedicated active set methods have been developed for this case; see \cite{Hungerlander_Rendl}.

In this paper we will focus on the question of finding a sequence of upper bounds converging to the global minimum and allowing a known estimate on the rate of convergence.
It should be emphasized that it is in general a difficult challenge in non-convex optimization to obtain such results.
Following Lasserre \cite{Las01,Las11}, our approach will be based on reformulating problem (\ref{eqfmin}) as an optimization problem over measures and then restricting it to   subclasses of measures that we are able to analyze. Sequences of upper bounds have been recently proposed and analyzed in  \cite{ConvAna,ElemCalc}; in the present paper we will propose new bounds for which we can prove a sharper  rate of convergence. We now  introduce our approach.

As observed by Lasserre \cite{Las01},  problem (\ref{eqfmin}) can be reformulated as
\begin{eqnarray*}
f_{\min}
&=& \min_{\mu \in \mathcal{M}(\bK)} \int_{\bK} f(x)d\mu(x),
\end{eqnarray*}
where $\mathcal{M}(\bK)$ denotes the set of probability measures supported on $\bK$.
Hence an upper bound on $f_{\min}$ may be obtained by considering a fixed probability measure $\mu$ on $\bK$.
In particular, the optimal value $f_{\min}$ is obtained when selecting for $\mu$  the Dirac measure at a global minimizer $x^*$ of $f$ in $\bK$.

Lasserre \cite{Las11} proposed the following strategy to build a hierarchy of upper bounds converging to  $\fmin$.  The idea is to do successive approximations of the Dirac measure at $x^*$ by using sum-of-squares (SOS) density functions of  growing degrees. More precisely, Lasserre \cite{Las11}  considered  a set of Borel measures $\mu_r$ obtained by selecting a fixed, finite Borel measure $\mu$ on $\bK$ (like, e.g., the Lebesgue measure) together with a polynomial density function that is a
sum-of-squares (SOS) polynomial of given degree $r$.

 When selecting for $\mu$ the Lebesgue measure on $\bK$ this leads to the following  hierarchy of
upper bounds on $f_{\min}$, indexed by $r\in \oN$: 
\begin{eqnarray}\label{fundr}
\underline{f}^{(r)}_{\mathbf{K}}:=\inf_{h\in\Sigma[x]_r}\int_{\mathbf{K}}h(x)f(x)dx \ \ \text{s.t. $\int_{\mathbf{K}}h(x)dx=1$,}
\end{eqnarray}
where  $\Sigma[x]_r$ denotes the set of  sum-of-squares polynomials  of degree at most $r$.

The convergence to $\fmin$ of the bounds $\underline f^{(r)}_{\bK}$ is an immediate consequence of the following theorem, which holds for
general compact sets $\bK$ and continuous functions $f$.

\begin{theorem}\label{thmlasnn}\cite[cf.\ Theorem 3.2]{Las11}
Let $\mathbf{K}\subseteq \oR^n$ be compact, let $\mu$ be an arbitrary finite Borel measure supported by $\mathbf{K}$, 
 and let $f$ be a continuous function on $\oR^n$. Then, $f$ is nonnegative on $\mathbf{K}$ if and only if
\begin{equation*}
\int_{\mathbf{K}}fg^2d\mu\ge0 \ \ \forall  g\in\oR[x].
\end{equation*}
Therefore, the minimum of $f$ over $\mathbf{K}$ can be expressed as
\begin{equation}\label{formulamu}
f_{\min}=\inf_{h \in \Sigma[x]}\int_{\mathbf{K}}fhd\mu \ \ \text{s.t. $\int_{\mathbf{K}}hd\mu=1$.}
\end{equation}
\end{theorem}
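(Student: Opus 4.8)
The theorem has two parts: (i) the claim that $f \ge 0$ on $\mathbf{K}$ iff $\int_{\mathbf{K}} fg^2\, d\mu \ge 0$ for all $g \in \mathbb{R}[x]$, and (ii) the resulting reformulation \eqref{formulamu}. Let me sketch how I'd prove each.
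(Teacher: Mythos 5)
Your proposal stops precisely where the argument should begin: you correctly identify the two claims---(i) the integral characterization of nonnegativity, and (ii) the reformulation \eqref{formulamu}---and then write ``Let me sketch how I'd prove each,'' but no sketch follows. There is no content to evaluate as a proof; this is a genuine gap, since neither direction of (i) nor the derivation of (ii) from (i) is argued.

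For reference, here is what is actually needed. Part (i) is essentially Lasserre's Theorem 3.2 in \cite{Las11}; the nontrivial direction is that if $f$ takes a negative value at some $x_0 \in \mathbf{K}$, then by continuity $f < 0$ on a neighborhood $U$ of $x_0$, and since $\mathbf{K}$ is the support of $\mu$ we have $\mu(U) > 0$; one then constructs a sequence of polynomials $g_k$ such that $g_k^2 \, d\mu$ concentrates mass on $U$ (e.g., via Stone--Weierstrass approximation of a bump function), forcing $\int_{\mathbf{K}} f g_k^2 \, d\mu < 0$ for large $k$. The reverse direction is immediate since $f g^2 \ge 0$ pointwise when $f \ge 0$. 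For part (ii), the paper's derivation is short and worth internalizing: write $f_{\min} = \sup\{\lambda : f - \lambda \ge 0 \text{ on } \mathbf{K}\}$, apply (i) to $f - \lambda$ to get $f_{\min} = \sup\{\lambda : \int_{\mathbf{K}} h(f-\lambda)\, d\mu \ge 0 \ \forall h \in \Sigma[x]\}$, expand $\int h(f-\lambda)\,d\mu = \int hf\,d\mu - \lambda \int h\,d\mu$, and normalize $\int_{\mathbf{K}} h\, d\mu = 1$; the $\sup$ over $\lambda$ then becomes the $\inf$ over normalized SOS densities $h$ of $\int_{\mathbf{K}} fh\, d\mu$, which is \eqref{formulamu}. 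You should supply these arguments (or at minimum cite \cite{Las11} for (i) and give the normalization argument for (ii)) before the proposal can be assessed as a proof.
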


As already mentioned in \cite{ConvAna}, formula \eqref{formulamu} does not appear explicitly in \cite{Las11} which only  mentions the characterization  of nonnegative functions, but one can derive it easily from this nonnegativity characterization. To see this we write $f_{\min} = \sup\{\lambda: f(x)-\lambda\geq 0 \text{ on } \mathbf{K}\}$. Then, 
for any finite Borel measure $\mu$, we have $f_{\min}=\sup\{\lambda: \int_{\mathbf{K}}h(f-\lambda)d\mu\geq 0\ \forall h\in \Sigma[x]\}$. As $\int_{\mathbf{K}}h(f-\lambda)d\mu = \int_{\mathbf{K}}hf\, d\mu - \lambda\int_{\mathbf{K}}h\, d\mu$, after normalizing $\int_{\mathbf{K}}h\, d\mu = 1$, the formula (\ref{formulamu}) follows.

In the recent work \cite{ElemCalc}, it is shown that for a compact set $\mathbf{K}\subseteq  [0,1]^n$ one may obtain a similar result
using  density functions arising from  (products of univariate) beta distributions.
In particular, the following theorem is implicit in \cite{ElemCalc}.

\begin{theorem}\cite{ElemCalc}
Let $\mathbf{K}\subseteq  [0,1]^n$ be a compact set, let $\mu$ be an arbitrary finite Borel measure supported by $\mathbf{K}$, 
 and let $f$ be a continuous function on $\oR^n$. Then, $f$ is nonnegative on $\mathbf{K}$ if and only if
\begin{equation*}
\int_{\mathbf{K}}fhd\mu\ge0
\end{equation*}
for all $h$ of the form
\begin{equation}
\label{multivariate beta density}
h(x)= \frac{    \prod_{i=1}^n x_i^{\beta_i}(1-x_i)^{\eta_i}  }{ \int_{\bK}  \prod_{i=1}^n x_i^{\beta_i}(1-x_i)^{\eta_i} },
\end{equation}
where the $\beta_i'$s and $\eta_i'$s are nonnegative integers.
Therefore, the minimum of $f$ over $\mathbf{K}$ can be expressed as
\begin{equation}\label{formulamu1}
f_{\min}=\inf_{h}\int_{\mathbf{K}}fhd\mu \ \ \text{s.t. $\int_{\mathbf{K}}hd\mu=1$,}
\end{equation}
where the infimum is taken over all beta-densities $h$ of the form \eqref{multivariate beta density}.
\end{theorem}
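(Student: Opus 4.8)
The plan is to mimic the structure of the proof of Theorem~\ref{thmlasnn}, reducing the multivariate statement to a one-dimensional density argument and then extracting the optimization formula \eqref{formulamu1} in exactly the same way as formula \eqref{formulamu} was derived. First I would dispense with the easy direction: if $f\ge 0$ on $\mathbf K$, then since each density $h$ in \eqref{multivariate beta density} is nonnegative on $\mathbf K\subseteq[0,1]^n$, the integral $\int_{\mathbf K}fh\,d\mu$ is an integral of a nonnegative function against a finite Borel measure, hence nonnegative. The content is in the converse: assuming $\int_{\mathbf K}fh\,d\mu\ge 0$ for all beta-densities $h$ of the prescribed form, I want to conclude $f\ge 0$ on $\mathbf K$.

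For the converse I would argue by contraposition. Suppose $f(x^*)<0$ for some $x^*\in\mathbf K$; by continuity there is a small box (or, more carefully, a relatively open neighborhood $U$ of $x^*$ in $\mathbf K$) on which $f$ is bounded above by some $-\varepsilon<0$. The key step is to show that one can choose nonnegative integer exponents $\beta_i,\eta_i$ so that the product density $\prod_i x_i^{\beta_i}(1-x_i)^{\eta_i}$, after normalization, concentrates its mass arbitrarily close to $x^*$ — i.e., the associated probability measure on $[0,1]^n$ converges weakly to the Dirac measure at $x^*$. This is the analogue, in the beta-distribution setting, of the fact that SOS densities can approximate a Dirac measure. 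Concretely, the univariate beta density with parameters $(\beta+1,\eta+1)$ has mean $\tfrac{\beta+1}{\beta+\eta+2}$ and variance $O(1/(\beta+\eta))$, so by letting $\beta_i,\eta_i\to\infty$ with the ratio $\beta_i/(\beta_i+\eta_i)\to x_i^*$ one forces concentration at $x^*$; if a coordinate $x_i^*$ equals $0$ or $1$ one takes the corresponding $\beta_i$ or $\eta_i$ to be $0$ and lets the other tend to infinity. Once this concentration is established, $\int_{\mathbf K}fh\,d\mu\to f(x^*)\mu(\text{nbhd})\cdot(\cdots)$ — more precisely, writing the normalized beta measure as $\mu_h$, one gets $\int f\,d\mu_h\to f(x^*)<0$, contradicting nonnegativity of all such integrals. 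One subtlety worth flagging: the theorem integrates against a fixed auxiliary measure $\mu$, not against the beta measure directly, so one must argue that the normalized measure $h(x)\,d\mu(x)/\!\int h\,d\mu$ (which is what \eqref{formulamu1} uses) still concentrates at $x^*$; this requires $x^*$ to lie in the support of $\mu$, but since $f_{\min}$ is attained and any minimizer in $\Supp(\mu)$ suffices — or since one may restrict attention to $\mathbf K=\Supp(\mu)$ — this causes no real difficulty, and indeed \eqref{eqfmin} is unaffected.

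Having established the nonnegativity characterization, the formula \eqref{formulamu1} follows verbatim from the argument given in the excerpt for \eqref{formulamu}: write $f_{\min}=\sup\{\lambda : f-\lambda\ge 0 \text{ on } \mathbf K\}$, apply the characterization to $f-\lambda$ in place of $f$ to get $f_{\min}=\sup\{\lambda : \int_{\mathbf K}h(f-\lambda)\,d\mu\ge 0 \ \forall h\}$, use linearity $\int_{\mathbf K}h(f-\lambda)\,d\mu=\int_{\mathbf K}hf\,d\mu-\lambda\int_{\mathbf K}h\,d\mu$, and normalize $\int_{\mathbf K}h\,d\mu=1$ to obtain the infimum formulation. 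The only place requiring genuine care — and the main obstacle — is the concentration/weak-convergence step: one must verify that products of univariate beta densities with integer parameters are rich enough to approximate every Dirac measure on $[0,1]^n$ (including boundary points), and that multiplying by and renormalizing against the fixed measure $\mu$ preserves this. Everything else is bookkeeping that parallels the proof of Theorem~\ref{thmlasnn}.
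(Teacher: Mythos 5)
The paper does not prove this theorem: it states it as ``implicit in \cite{ElemCalc}'' and only supplies, in the text following Theorem~\ref{thmlasnn}, the generic passage from a nonnegativity characterization to the infimum formula, which you invoke verbatim and correctly. Your reconstruction of the characterization itself is the natural argument and is sound in outline, but two points carry the real weight and would need firming up in a complete writeup. First, the hypothesis must be read as $\Supp(\mu)=\mathbf{K}$ (as in Lasserre's Theorem~3.2, and as the paper tacitly assumes); under the literal reading $\Supp(\mu)\subseteq\mathbf{K}$ the statement is false (take $\mu$ the zero measure), and your fallback of restricting to $\mathbf{K}=\Supp(\mu)$ would only yield nonnegativity on $\Supp(\mu)$, which is weaker than the claim, while ``any minimizer in $\Supp(\mu)$'' need not exist — adopting the strong reading is the correct fix, not a shortcut. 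Second, the concentration estimate $\int_{\mathbf{K}\setminus U}h\,d\mu\big/\int_{\mathbf{K}}h\,d\mu\to 0$ for neighborhoods $U$ of $x^*$ does not follow from the beta variance against Lebesgue measure, since $\mu$ is arbitrary; it comes from the log-concavity of $x\mapsto\prod_i x_i^{\beta_i}(1-x_i)^{\eta_i}$ (exponential decay off its maximizer, uniformly on compacta bounded away from it) together with $\mu(U')>0$ for every neighborhood $U'\ni x^*$ — which is exactly where $\Supp(\mu)=\mathbf{K}$ is used. You flag both issues and the boundary cases $x_i^*\in\{0,1\}$, so there is no genuine gap, but these are the two places where a proof has actual content rather than bookkeeping.
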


For the box $\bK=[0,1]^n$ and  selecting for $\mu$ the Lebesgue measure, we obtain a hierarchy of upper bounds $f^H_r$ converging to $\fmin$, where $f^H_r$ is the optimum value of the program (\ref{formulamu1}) when the infimum is taken over all beta-densities $h$ of the form \eqref{multivariate beta density} with degree  $r$.

The rate of convergence  of the upper bounds $\underline{f}^{(r)}_{\mathbf{K}}$ and $f^H_r$ has been investigated recently in  \cite{ConvAna} and \cite{ElemCalc}, respectively.
It is shown in \cite{ConvAna} that $\underline{f}^{(r)}_{\mathbf{K}} -\fmin=O(1/\sqrt r)$ for a large class of compact sets $\bK$ (including all convex bodies and thus the box $[0,1]^n$ or $[-1,1]^n$) and the stronger rate $f^H_r-\fmin =O(1/r)$ is shown in \cite{ElemCalc} for the box $\bK=[0,1]^n$.
While the parameters $\underline f^{(r)}_{\bK}$ can be computed using semidefinite optimization (in fact, a generalized eigenvalue computation problem, see \cite{Las11}), an advantage of the parameters $f^H_r$ is that their computation involves only elementary operations (see \cite{ElemCalc}).

Another possibility for getting a hierarchy of upper bounds is grid search, where one takes the best function evaluation at all rational points in $\mathbf K=[0,1]^n$ with given denominator $r$. It has been shown in \cite{ElemCalc} that these bounds  have a rate of convergence in $O(1/r^2)$. However, the computation of the order $r$ bound needs  an exponential number $r^n$ of function evaluations.

\subsection{New contribution}
In the present work we continue this line of research. For the  box $\bK=[-1,1]^n$,  our objective is to build  a new hierarchy of measure-based upper bounds, for which we will be able to show a sharper rate of convergence in $O(1/r^2)$.
We obtain these upper bounds by considering a specific Borel measure $\mu$   (specified below in (\ref{eqmu})) and  polynomial density functions with a so-called Schm\"udgen-type SOS representation (as in (\ref{eqSch}) below).

We first recall the relevant result of Schm\"udgen \cite{Sch91}, which gives SOS representations for positive polynomials on a basic closed semi-algebraic set (see also, e.g., \cite{DP},\cite[Theorem 3.16]{sos}, \cite{Mar}).

\begin{theorem}[Schm\"udgen \cite{Sch91}]
\label{thm:Schmuedgen}
Consider the set   $\mathbf{K} = \{{x} \in \mathbb{R}^n\mid  g_1(x)\geq 0,\dotsc,g_m(x) \geq 0\}$, where $g_1,\dotsc,g_m \in\mathbb{R}[{x}]$,
	and assume that $\bK$ is  compact.
 If $p \in \mathbb{R}[x]$ is positive on $\bK$, then $p$ can be written as $p = \sum_{I \subseteq [m]} \sigma_I \prod_{i\in I} g_i$, where $\sigma_I$ ($I \subseteq [m]$) are sum-of-squares polynomials.
\end{theorem}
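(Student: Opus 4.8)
The plan is to deduce this classical result from the Krivine--Stengle Positivstellensatz (in its strict-positivity form) together with a representation theorem for archimedean preorderings, the two being glued by a Hahn--Banach separation argument. Throughout, write $T=\{\sum_{I\subseteq[m]}\sigma_I\prod_{i\in I}g_i:\ \sigma_I\in\Sigma[x]\}$ for the preordering generated by $g_1,\dots,g_m$, so that the claim is precisely that $p>0$ on $\bK$ implies $p\in T$.

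The first and decisive step is to show that $T$ is \emph{archimedean}, i.e.\ $N-\sum_{i=1}^n x_i^2\in T$ for some $N\in\oN$. Since $\bK$ is compact there is $R>0$ with $R^2-\|x\|^2>0$ on $\bK$, and the Krivine--Stengle Positivstellensatz then furnishes $\sigma,\tau\in T$ with $\sigma\,(R^2-\|x\|^2)=1+\tau$. On $\bK$ the right-hand side is at least $1$ while $R^2-\|x\|^2>0$, so $\sigma>0$ on $\bK$ as well; hence $\sigma$ is itself invertible modulo $T$, and a careful recombination of these two certificates (W\"ormann's lemma; see e.g.\ \cite{DP,Mar}) upgrades the identity to a genuine membership $N-\|x\|^2\in T$. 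Adding $\sum_{j\ne i}x_j^2\in T$ then gives $N-x_i^2\in T$ for every $i$, which is the archimedean property.

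Next I would invoke the representation theorem for archimedean preorderings: every $\oR$-linear functional $L$ on $\oR[x]$ with $L(T)\subseteq\oR_{\ge0}$ and $L(1)=1$ is of the form $L(q)=\int_{\bK}q\,d\nu$ for some Borel probability measure $\nu$ on $\bK$. A clean route is the GNS construction: the bilinear form $(q_1,q_2)\mapsto L(q_1q_2)$ is positive semidefinite (as $L(q^2)\ge0$), so after quotienting by its kernel and completing one obtains a Hilbert space $H$ with cyclic vector $\mathbf 1$; the multiplication operators $M_{x_i}$ are symmetric, and they are \emph{bounded} precisely because $N-\|x\|^2\in T$ forces $\sum_i\|M_{x_i}\xi\|^2\le N\|\xi\|^2$. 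Being bounded, self-adjoint and pairwise commuting, the $M_{x_i}$ admit a joint spectral resolution $E$; the inequalities $L(g_jq^2)\ge0$ translate into $g_j(M)\succeq0$, which forces $\operatorname{supp}E\subseteq\{g_1\ge0,\dots,g_m\ge0\}=\bK$. Then $\nu:=\langle E(\cdot)\mathbf 1,\mathbf 1\rangle$ is a probability measure supported on $\bK$ with $L(q)=\langle q(M)\mathbf 1,\mathbf 1\rangle=\int_{\bK}q\,d\nu$.

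To finish, suppose for contradiction that $p>0$ on $\bK$ but $p\notin T$. Since $T$ is archimedean it contains a neighbourhood of $1$ in the order-unit seminorm $\|q\|:=\inf\{\lambda>0:\ \lambda-q\in T\ \text{and}\ \lambda+q\in T\}$, so $T$ has nonempty interior and Hahn--Banach separation produces a nonzero $\oR$-linear $L$ with $L(T)\subseteq\oR_{\ge0}$ and $L(p)\le0$; rescaling arranges $L(1)=1$. By the representation theorem, $L(p)=\int_{\bK}p\,d\nu\ge(\min_{\bK}p)\,\nu(\bK)=\min_{\bK}p>0$, a contradiction, so $p\in T$. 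The main obstacle in this programme is the first step: turning the purely geometric fact that $\bK$ is bounded into the algebraic certificate $N-\|x\|^2\in T$ requires the (itself substantial) Krivine--Stengle Positivstellensatz together with W\"ormann's non-obvious recombination, and it is exactly here that one uses that $T$ is the full \emph{preordering} generated by the $g_i$; for the smaller quadratic module $\{\sigma_0+\sum_i\sigma_ig_i:\sigma_i\in\Sigma[x]\}$ the analogous implication genuinely fails, so Schm\"udgen's theorem has no counterpart in that generality.
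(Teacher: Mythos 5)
The paper does not prove Theorem \ref{thm:Schmuedgen}: it is quoted as a classical result, with a citation to Schm\"udgen's original paper \cite{Sch91} and pointers to the expositions \cite{DP}, \cite{sos}, \cite{Mar}. There is thus no in-paper argument to compare against. Taken on its own terms, your sketch is a correct outline of the now-standard proof --- essentially the one found in \cite{DP} and \cite{Mar} --- in three steps: archimedeanity of the preordering $T$ (via the Krivine--Stengle strict Positivstellensatz plus W\"ormann's lemma), a GNS/spectral representation theorem for linear functionals nonnegative on an archimedean preordering, and an Eidelheit/Hahn--Banach separation.

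Two places would need more care in a full write-up. The step you yourself identify as the main obstacle really is one: passing from the certificate $\sigma(R^2-\sum_i x_i^2)=1+\tau$ with $\sigma,\tau\in T$ to the membership $N-\sum_i x_i^2\in T$ is a delicate induction, not merely a ``recombination''; one must first manufacture bounds of the form $c\pm\sigma\in T$ and $c\pm\tau\in T$, which itself takes another round of work (see the proof of W\"ormann's lemma in \cite{DP} or \cite{Mar}). Second, in the separation step the precise justification is that archimedeanity makes $1$ an algebraic interior point of the convex cone $T\subseteq\oR[x]$, so Eidelheit separation in the finest locally convex topology yields a nonzero $L\ge 0$ on $T$ with $L(p)\le 0$; the normalization $L(1)=1$ is then legitimate because $L(1)=0$ together with archimedeanity would force $L\equiv 0$. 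As a historical aside, Schm\"udgen's 1991 argument is organized differently: he solves the $K$-moment problem operator-theoretically and then invokes the Positivstellensatz to localize the support of the representing measure, rather than establishing archimedeanity of $T$ up front. So what you have sketched is really the later W\"ormann--Prestel--Marshall streamlining rather than Schm\"udgen's original proof, which is perfectly acceptable but worth being aware of.
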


For the box $\bK = [-1,1]^n$, described by the polynomial inequalities $1-x_1^2\ge 0 ,\ldots,1-x_n^2\ge 0$, we consider polynomial densities that allow a Schm\"udgen-type representation of bounded degree $r$:
\begin{equation}\label{eqSch}
h(x) =  \sum_{I \subseteq [n]} \sigma_I(x) \prod_{i\in I} (1-x_i^2),
\end{equation}
where the polynomials $\sigma_I$ are sum-of-squares polynomials with degree at most $r-2|I|$ (to ensure  that the degree of $h$ is at most $r$).
We will also fix the following Borel measure $\mu$ on $[-1,1]^n$ (which, as will be recalled below, is associated with some orthogonal polynomials):
\begin{equation}\label{eqmu}
d\mu(x) = \left(\prod_{i=1}^n \pi \sqrt{1-x_i^2} \right)^{-1}dx.
\end{equation}
This leads to the following new hierarchy of upper bounds  ${f}^{(r)}$ for $ f_{\min}$. 

\begin{definition}\label{defnewbound}
Let $\mu$ be the Borel measure from (\ref{eqmu}). For $r\in \oN$ consider the  parameters
\begin{eqnarray}\label{fr}
{f}^{(r)}:=\inf_{h}\int_{[-1,1]^n}f h d\mu \ \ \text{s.t. $\int_{[-1,1]^n}hd\mu=1$,}
\end{eqnarray}
where the infimum is taken over the polynomial densities $h$ that allow a Schm\"udgen-type  representation (\ref{eqSch}), where each $\sigma_I$ is a sum-of-squares polynomial with degree at most $r-2|I|$.
\end{definition}

The convergence of the parameters $f^{(r)}$ to $\fmin$ follows as a direct application of Theorem \ref{thmlasnn}, since $f_{\min} \leq f^{(r+1)} \leq f^{(r)}$ for all $r$ and sums of squares allow a Schm\"udgen-type representation. As a small remark, note that due to the fact that $[-1,1]^n$ has a nonempty interior the program \eqref{fr} has an optimal solution $h^*$ for all $r$ by \cite[Theorem 4.2]{Las11}.

A  main result in this paper is to show that the bounds $f^{(r)}$ have a rate of convergence in $O(1/r^2)$.
Moreover we will show that the parameter $f^{(r)}$ can be computed through  generalized eigenvalue computations.

\begin{theorem}\label{theodJNn}
Let $f \in \oR[x]$ be a polynomial and $\fmin$ be its minimum value over the box $[-1,1]^n$. For any $r$ large enough, the parameters  $f^{(r)}$  defined in (\ref{fr}) satisfy
$$f^{(r)}-f_{\min} = O\left(\frac{1}{r^2}\right).$$
\end{theorem}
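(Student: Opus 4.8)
The plan is to prove the rate $f^{(r)} - f_{\min} = O(1/r^2)$ by exhibiting a specific feasible density $h$ for the program (\ref{fr}) whose expectation $\int_{[-1,1]^n} f h\, d\mu$ is within $O(1/r^2)$ of $f_{\min}$. The natural candidate is a (suitably normalized) reproducing kernel of a weighted polynomial space, evaluated at a global minimizer $x^*$ of $f$ on $[-1,1]^n$; because $d\mu$ in (\ref{eqmu}) is the product Chebyshev measure, the relevant one-dimensional object is the Jackson kernel, whose reproducing-kernel polynomial $K(x,y) = \sum_{k} c_k T_k(x) T_k(y)$ (with Jackson damping coefficients $c_k$) is nonnegative, integrates to $1$ against $d\mu$, and concentrates mass near the diagonal at the optimal rate. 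I would take the $n$-variate density to be the product $h(x) = \prod_{i=1}^n K_i(x_i, x_i^*)$ (or an average over sign choices), so that $\int h\, d\mu = 1$ automatically.

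The key steps, in order, are: (1) recall the univariate Jackson kernel $K^{(r)}_J$ on $[-1,1]$ with respect to the Chebyshev weight, its positivity, normalization, and the crucial concentration estimate $\int_{-1}^1 (x - x^*)^2 K^{(r)}_J(x,x^*)\, d\mu(x) = O(1/r^2)$ — more generally that $\int |x-x^*|^{2j} K^{(r)}_J\, d\mu = O(r^{-2j})$ for fixed $j$ up to the degree budget; (2) verify that this kernel, as a polynomial in $x$ of degree $\le r$, admits a Schm\"udgen-type representation (\ref{eqSch}) with respect to $1 - x^2 \ge 0$, so that the product density $h$ lies in the feasible set of (\ref{fr}) — this uses that a univariate polynomial nonnegative on $[-1,1]$ is of the form $\sigma_0 + \sigma_1(1-x^2)$ with $\deg \sigma_0 \le r$, $\deg \sigma_1 \le r-2$, and that products of such representations across coordinates again give a Schm\"udgen representation on $[-1,1]^n$; (3) Taylor-expand $f$ around $x^*$ to second order with Lagrange remainder, $f(x) = f(x^*) + \nabla f(x^*)^\T (x - x^*) + \tfrac12 (x-x^*)^\T \nabla^2 f(\xi)(x-x^*)$, integrate against $h\, d\mu$, and bound each term: the linear term need not vanish (since $x^*$ may be on the boundary), but it is controlled because, for the one-dimensional marginals, $\int (x_i - x_i^*) K^{(r)}_J(x_i,x_i^*)\, d\mu = O(1/r^2)$ as well — this is the known first-moment estimate for Jackson-type kernels at a point, and it is precisely what upgrades the naive $O(1/r)$ to $O(1/r^2)$; the quadratic and mixed terms are $O(1/r^2)$ by step (1) and boundedness of $\nabla^2 f$ on the compact box; (4) assemble: $f^{(r)} - f_{\min} \le \int f h\, d\mu - f(x^*) = O(1/r^2)$, with the implied constant depending on $n$, $\deg f$, and derivative bounds of $f$.

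I expect the main obstacle to be step (3), and specifically the first-moment estimate $\int (x - x^*) K^{(r)}_J(x,x^*)\, d\mu(x) = O(1/r^2)$ together with its multivariate bookkeeping. When $x^*$ is interior, a symmetry/evenness argument kills the linear term exactly; when $x^*$ is a boundary point of $[-1,1]$ (e.g. $x_i^* = \pm 1$), one must genuinely use the structure of the Jackson kernel — its mass piles up at the endpoint from one side, and one needs the sharp bound on the signed first moment rather than just the (larger) bound on $\int |x - x^*| K^{(r)}_J\, d\mu$, which is only $O(1/r)$. Handling the cross terms $(x_i - x_i^*)(x_j - x_j^*)$ in the multivariate Hessian form cleanly, using the product structure of $h$ and Cauchy–Schwarz together with the second-moment bounds, is routine once the one-dimensional moment estimates are in hand. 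A secondary point requiring care is that the degree budget $r - 2|I|$ for the SOS multipliers must be respected after taking products across coordinates; one should check the degree counts are consistent, i.e. that the product of univariate Schm\"udgen certificates of degree $\le r$ yields an $n$-variate certificate whose $I$-th term has SOS multiplier of degree $\le r - 2|I|$, possibly after slightly adjusting constants or passing to "$r$ large enough" as the statement permits.
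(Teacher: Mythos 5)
Your proposal is correct, reaches the same conclusion as the paper, and shares the same scaffolding: use the Jackson kernel $h_r(x) = 1 + 2\sum_{k=1}^r g^r_k T_k(x) T_k(x^*)$ as a univariate density, take the product $H_{\underline r}(x) = \prod_i h_{r_i}(x_i)$ over coordinates, and invoke the Fekete/Markov--Luk\`acz theorem to certify the Schm\"udgen-type feasibility with the right degree count. Where you diverge from the paper is in how the central estimate $\int f H_{\underline r}\, d\mu - f(x^*) = O(1/r^2)$ is obtained, and the difference is substantive enough to be worth noting.

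The paper's argument is algebraic: write $f = \sum_{|\alpha|\le d} f_\alpha T_\alpha$ in the Chebyshev basis, use orthogonality of the $T_\alpha$ to compute the integral \emph{exactly} as $\sum_\alpha f_\alpha T_\alpha(x^*)\bigl(\prod_i g^{r_i}_{\alpha_i} - 1\bigr)$, and then invoke the estimate $|1 - g^r_k| = O(1/r^2)$ uniformly for $k \le d$ (their Lemma 2.3, which requires bounding coefficients of Chebyshev polynomials up to degree $d$). The constant is explicit in terms of $\sum_\alpha |f_\alpha|$. Your argument is analytic: Taylor-expand $f$ about $x^*$ to second order with Lagrange remainder, so that the error splits into a first-moment term $\nabla f(x^*)^\T \int (x-x^*)\,H\,d\mu$ and a Hessian remainder bounded by $\sup\|\nabla^2 f\|\cdot\int\|x-x^*\|^2\,H\,d\mu$. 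By the product structure this reduces to univariate first and second moments, which you correctly identify as the crux. These moments are in fact controlled by the same mechanism: a direct computation gives $\int (x-x^*)\,h_r\,d\mu = x^*(g^r_1 - 1)$ and a similar expression for the second moment involving $g^r_1, g^r_2$, so your approach needs $|1-g^r_k| = O(1/r^2)$ only for $k = 1,2$ rather than for all $k \le d$; since $g^r_1 = \cos\bigl(\pi/(r+2)\bigr)$ this is immediate. The tradeoff is that your constant depends on sup-norms of $\nabla f$ and $\nabla^2 f$ rather than on the Chebyshev coefficients, and the argument is slightly longer in its bookkeeping of cross terms, but the approach is valid and requires strictly less of the Jackson-coefficient machinery. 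Your observation that the \emph{signed} first moment being $O(1/r^2)$ (even though $\int|x-x^*|\,h_r\,d\mu$ is only $O(1/r)$) is what rescues the rate at boundary minimizers is exactly right and is the heart of why the Jackson damping wins over cruder kernels.

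One small point to tighten if you write this out in full: when you assert the first-moment estimate as ``known,'' you should actually derive it, since it is the load-bearing step; the one-line identity $\int(x-x^*)\,h_r\,d\mu = x^*(g^r_1 - 1) = x^*\bigl(\cos(\pi/(r+2)) - 1\bigr)$ does the job and also makes transparent that no parity or interiority assumption on $x^*$ is needed.
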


As already observed above  this result compares favorably with the estimate:  $\underline{f}^{(r)}_{\mathbf{K}} - f_{\min} = O\left(\frac{1}{\sqrt{r}}\right)$ shown in  \cite{ConvAna} for the bounds $f^{(r)}_{\bK}$ based on  using SOS densities.
(Note however that the latter convergence rate holds for a larger class of  sets $\mathbf{K}$ that includes all convex bodies; see \cite{ConvAna} for details.)
The new result also improves the estimate $f^H_r-\fmin=O\left(\frac{1}{{r}}\right)$, shown in \cite{ElemCalc} for the bounds $f^H_r$ obtained by using  densities arising from beta distributions.

We now illustrate the optimal densities appearing in the new bounds $f^{(r)}$ on an example.

\begin{example}
Consider the minimization of the Motzkin polynomial $$f(x_1,x_2)=64(x_1^4x_2^2+x_1^2x_2^4) - 48x_1^2x_2^2 +1$$ over the hypercube $[-1,1]^2$,
 which has four global minimizers at the points
$\left(\pm \frac{1}{2},\pm \frac{1}{2}\right)$, and $f_{\min} = 0$.
Figure \ref{figure:motzkin18} shows the
optimal density function $h^*$ computed when solving the problem  (\ref{fr}) for  degrees $12$ and $16$, respectively.
Note that the optimal density $h^*$ shows four peaks at the four global minimizers of $f$ in $[-1,1]^2$.
The corresponding upper bounds from (\ref{fr}) are $f^{(12)} = 0.8098$ and $f^{(16)} = 0.6949$.

\begin{figure}[h!]
\begin{center}
\includegraphics[width=0.4\textwidth]{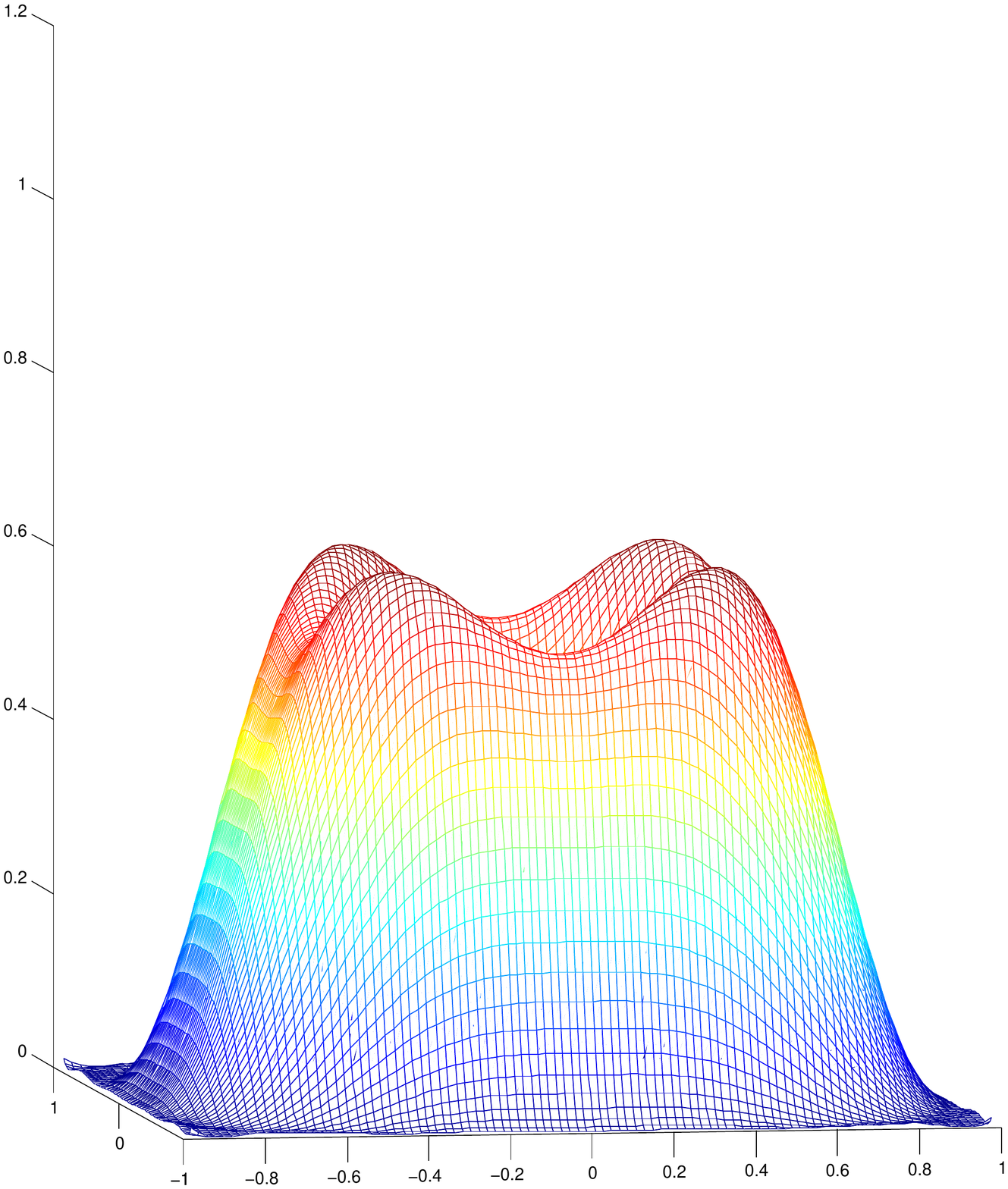}
\includegraphics[width=0.4\textwidth]{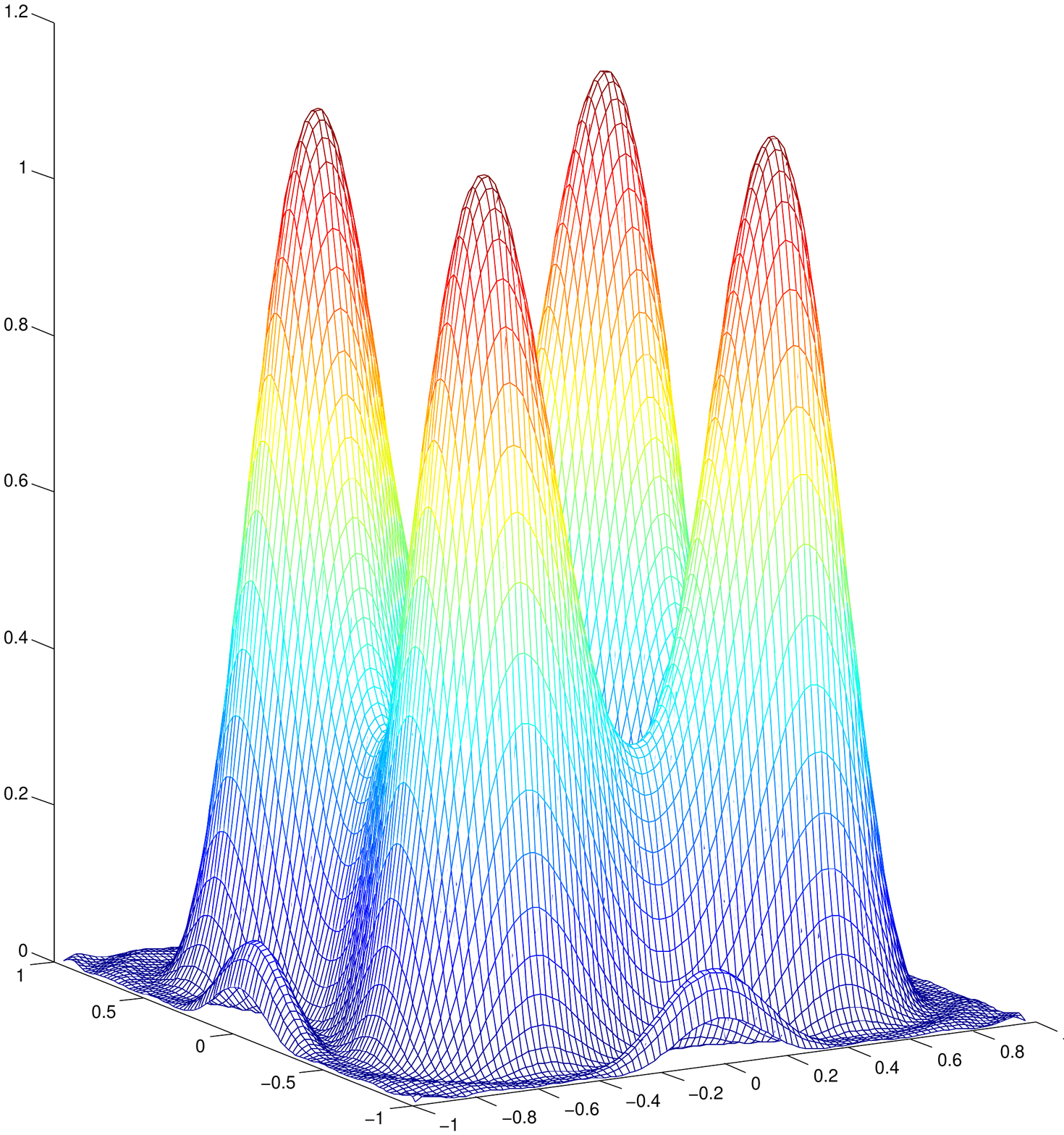}\\
  \caption{\label{figure:motzkin18}Graphs of $h^*$ on $[-1,1]^2$ ($\deg(h^*)=12,16$) for the Motzkin polynomial.}
\end{center}
\end{figure}

\end{example}




\subsubsection*{Strategy and outline of the paper}

In order to show the convergence rate in $O(1/r^2)$ of Theorem \ref{theodJNn} we need to exhibit a polynomial density function $h_r$ of degree at most $r$ which admits an SOS representation of Schm\"udgen-type and for which we are able to show that
$\int_{[-1,1]^n} fhd\mu -\fmin =O(1/r^2).$
The idea is to find such a polynomial density which approximates well the Dirac delta function at a global minimizer $x^*$ of $f$ over $[-1,1]^n$.
For this we will use  the well-established polynomial kernel method (KPM) and, more specifically, we will use the Jackson kernel, a well known tool in approximation theory  to yield best (uniform) polynomial approximations of continuous functions.



The paper is organized as follows.
 Section \ref{secbackground} contains  some background information   about the polynomial kernel method needed for our analysis of the new bounds
 $f^{(r)}$. Specifically, we introduce Chebyshev polynomials in Section \ref{secChebyshev} and Jackson kernels in Section \ref{secJacksonkernel}, and   then we use them in Section \ref{secJacksonapprox} to construct  suitable polynomial densities  $h_r$ giving good approximations of  the Dirac delta function at a global minimizer of $f$ in the box.
 We then carry out the analysis of the upper bounds on $\fmin$  in Section \ref{secunivariate} for the univariate case and in Section \ref{secmultivariate} for the general multivariate case, thus proving the result of Theorem \ref{theodJNn}.
 In Section \ref{seceigenvalue} we show how the new  bounds $f^{(r)}$ can be computed as generalized eigenvalue problems and in  Section \ref{secnumerical} we conclude with some numerical examples illustrating the behavior of the  bounds $f^{(r)}$.

\subsubsection*{Notation}

Throughout, $\Sigma[x]$ denotes the set of all sum-of-squares (SOS) polynomials (i.e., all polynomials $h$ of the form
$h=\sum_{i=1}^k p_i(x)^2$ for some polynomials $p_1,\ldots,p_k$  and  $k\in \oN$) and
 $\Sigma[x]_r$ denotes the set of  SOS polynomials  of degree at most $r$ (of the form $h=\sum_{i=1}^k p_i(x)^2$ for some polynomials $p_i$ of degree at most $r/2$).
For $\alpha\in \oN^n$,  $\Supp(\alpha)=\{i\in [n]: \alpha_i\ne 0\}$ denotes the support of $\alpha$ and, for $\alpha,\beta\in \oN^n$, $\delta_{\alpha,\beta}\in \{0,1\}$ is equal to 1 if and only if $\alpha=\beta$.

 \section{Background on the polynomial kernel method}\label{secbackground}

Our goal is to approximate the Dirac delta function at a given point $x^*\in \oR^n$ as well as possible, using  polynomial density functions  of bounded  degrees.
This is  a classical question in approximation theory.
In this section we will review how this may be done using the polynomial kernel method and, in particular, using Jackson kernels.
This theory is usually developed using the Chebyshev polynomials, and we start by reviewing their properties.
We will follow mainly the work \cite{KPM} for our exposition and we  refer to the handbook \cite{AS}  for more background information.

\subsection{Chebyshev polynomials }\label{secChebyshev}

We will use the  univariate polynomials $T_k(x)$ and $U_k(x)$, respectively known as the Chebyshev polynomials of the first and second kind.
They are  defined as follows:
\begin{equation}\label{eqTUn}
T_k(x)= \cos(k\arccos (x)),\ \ \ U_k(x)=\frac{\sin((k+1)\arccos(x))}{ \sin (\arccos (x))} \ \ \text{ for } \ x\in [-1,1],\ k \in \oN,
\end{equation}
and they satisfy the following recurrence relationships:
\begin{equation}\label{eqTnrec}
T_0(x)=1,\ T_{-1}(x)=T_1(x)=x, \ T_{k+1}(x)= 2xT_k(x)-T_{k-1}(x),
\end{equation}
\begin{equation}\label{eqUnrec}
U_0(x)=1, \ U_{-1}(x)=0,  
\ U_{k+1}(x)=2xU_k(x)-U_{k-1}(x).
\end{equation}
As a direct application one can verify that
\begin{equation}\label{eqval1}
\begin{split}
T_k(0) &= \begin{cases}
	0 & \text{ for }k\text{ odd},\\
	(-1)^{\frac{k}{2}} & \text{ for }k\text{ even},
	\end{cases}\\
	T_k(1)&=1, \ U_k(1)=k+1,\ U_k(-1)=(-1)^k(k+1) \ \ \ \text{ for } k\in \oN.
	\end{split}
\end{equation}
The Chebyshev polynomials have the extrema
\[
\max_{x \in [-1,1]} |T_k(x) | = 1 \mbox{ and } \max_{x \in [-1,1]} |U_k(x) | = k+1,
\]
attained at $x = \pm 1$ (see, e.g., \cite[\S22.14.4, 22.14.6]{AS}).

The Chebyshev polynomials  are orthogonal for the following inner product on the space of integrable functions over $[-1,1]$:
\begin{equation}\label{eqinner}
\langle f,g\rangle =\int_{-1}^1 \frac{f(x)g(x)}{\pi \sqrt{1-x^2}} dx,
\end{equation}
and their orthogonality relationships read
\begin{equation}\label{eqortho}
\langle T_k,T_m \rangle = 0 \text{ if } k\ne m,\
\langle T_0,T_0 \rangle =1, \
\langle T_k,T_k\rangle = \tfrac{1}{2} \ \text{ if } k\ge 1.
\end{equation}
For any $r\in \oN$ the Chebyshev polynomials $T_k$ ($k\le r$) form a basis of the space of univariate polynomials with degree at most $r$.
One may write the Chebyshev polynomials in the standard monomial basis using the relations
\begin{align*}
	T_k(x) &= \sum_{i=0}^k t_i^{(k)}x^i = \frac{k}{2}\sum_{m=0}^{\lfloor \frac{k}{2}\rfloor} (-1)^m\frac{(k-m-1)!}{m!(k-2m)!}(2x)^{k-2m}, &k>0\\
	U_{k-1}(x) &= \sum_{i=0}^{k-1}u_i^{(k)}x^i = \sum_{m=0}^{\lfloor \frac{k-1}{2}\rfloor} (-1)^m\frac{(k-m-1)!}{m!(k-1-2m)!}(2x)^{k-1-2m}, &k>1;
\end{align*}
see, e.g., \cite[Chap. 22]{AS}.
From this, one may derive a bound on the largest coefficient in absolute value appearing in the above expansions of $T_k(x)$ and $U_{k-1}(x)$.
A proof for the  following result will be given in the appendix.

\begin{lemma}\label{lemChebyshev}
For any fixed integer $k > 1$,
one has
\begin{equation}
\label{Cheb coeff bound}
\max_{0\le i \le k-1} |u_i^{(k)}| \le \max_{0\le i \le k} |t_i^{(k)}| = 2^{k-1-2\psi(k)}\frac{k(k-\psi(k)-1)!}{\psi(k)!(k-2\psi(k))!},
\end{equation}
where  $\psi(k) = 0$ for $k\leq 4$ and $\psi(k)=\left\lceil\frac{1}{8}\left(4k-5 - \sqrt{8k^2-7}\right)\right\rceil$ for $k\geq 4$. Moreover, the right-hand side of (\ref{Cheb coeff bound}) increases monotonically with increasing $k$.
\end{lemma}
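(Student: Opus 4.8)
The plan is to work directly from the explicit monomial expansions of $T_k$ and $U_{k-1}$ given just before the statement. First I would dispose of the easy comparison between the two kinds of Chebyshev polynomials: in the sums
$$T_k(x) = \tfrac{k}{2}\sum_{m=0}^{\lfloor k/2\rfloor}(-1)^m\frac{(k-m-1)!}{m!(k-2m)!}(2x)^{k-2m}, \quad U_{k-1}(x)=\sum_{m=0}^{\lfloor (k-1)/2\rfloor}(-1)^m\frac{(k-m-1)!}{m!(k-1-2m)!}(2x)^{k-1-2m},$$
the coefficient of a given power comes from a term indexed by some $m$; comparing the $m$-th terms, the factor $\tfrac{k}{2}\cdot 2^{k-2m}/(k-2m)!$ versus $2^{k-1-2m}/(k-1-2m)!$ differ by a factor $k/(k-2m)\ge 1$, so each coefficient of $U_{k-1}$ is dominated in absolute value by the corresponding coefficient of $T_k$ (with the appropriate index shift), giving $\max_i|u_i^{(k)}|\le\max_i|t_i^{(k)}|$. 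So it suffices to analyze $\max_i|t_i^{(k)}|$.

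The heart of the argument is then a discrete optimization: writing $c_m := 2^{k-2m}\frac{(k-m-1)!}{m!(k-2m)!}$ for the magnitude of the $m$-th term's coefficient (up to the global factor $k/2$), I would determine for which $m$ the ratio $c_{m+1}/c_m$ crosses $1$. Computing
$$\frac{c_{m+1}}{c_m} = \frac{1}{4}\cdot\frac{(k-2m)(k-2m-1)}{(m+1)(k-m-1)},$$
the sequence $c_m$ is unimodal, and the maximizing index $m^*$ is the smallest $m$ for which this ratio drops to $\le 1$, i.e. the smallest $m$ with $(k-2m)(k-2m-1)\le 4(m+1)(k-m-1)$. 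Expanding this quadratic inequality in $m$ and solving gives the threshold $\tfrac18\big(4k-5-\sqrt{8k^2-7}\big)$; taking the ceiling yields exactly the claimed $\psi(k)$. One checks the small cases $k\le 4$ separately, where the ceiling formula would be nonpositive and the maximum is attained at $m=0$, consistent with $\psi(k)=0$. Substituting $m=\psi(k)$ back into $\tfrac{k}{2}c_m$ produces the closed-form right-hand side of \eqref{Cheb coeff bound}.

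Finally, for the monotonicity claim I would compare the bound at $k$ and $k+1$. Rather than manipulating the messy closed form directly, the cleaner route is to note that $\max_i|t_i^{(k)}|$ is one particular coefficient of $T_k$, and to use the recurrence $T_{k+1}=2xT_k-T_{k-1}$ to show that the largest coefficient can only grow: the coefficient of $x^{i}$ in $T_{k+1}$ is $2t_{i-1}^{(k)}-t_i^{(k-1)}$, and choosing $i$ to line up near the maximizing index of $T_k$ gives a coefficient of $T_{k+1}$ at least as large in absolute value as $\max_i|t_i^{(k)}|$ (the two terms do not cancel because consecutive Chebyshev coefficients near the peak have opposite signs, so they add constructively).

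The main obstacle I anticipate is pinning down $\psi(k)$ exactly — getting the ceiling and the boundary behavior at $k=4$ right — and, more delicately, making the monotonicity argument fully rigorous: one must be careful that the index achieving the maximum for $T_k$ shifts by at most one when passing to $T_{k+1}$, so that the sign-alternation/no-cancellation claim genuinely applies. An alternative, purely computational fallback for monotonicity is to plug $\psi(k)$ into the closed form and bound the ratio of consecutive values using standard factorial estimates, but that is more tedious and I would only resort to it if the recurrence argument proves awkward to write cleanly.
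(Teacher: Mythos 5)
Your analysis of the $\max|u_i^{(k)}| \le \max|t_i^{(k)}|$ inequality and the identification of $\psi(k)$ via the consecutive-coefficient ratio match the paper's proof essentially verbatim: the paper also computes the ratio $|t_{k-2(m+1)}^{(k)}|/|t_{k-2m}^{(k)}| = \tfrac{(k-2m)(k-2m-1)}{4(m+1)(k-m-1)}$, solves the quadratic to get the thresholds $m_1 = \tfrac18\bigl(4k-5-\sqrt{8k^2-7}\bigr)$ and $m_2$, and concludes unimodality with the peak at $\lceil m_1\rceil = \psi(k)$ for $k\ge 4$, handling $1<k\le 4$ separately.

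For the monotonicity claim your recurrence argument is a genuinely different and in fact cleaner route than the paper's. The paper proceeds in two steps: for fixed $m$ it computes $|t_{k+1-2m}^{(k+1)}|/|t_{k-2m}^{(k)}| = 2\cdot\tfrac{k+1}{k}\cdot\tfrac{k-m}{k+1-2m}\ge 1$ to get $|t^{(k)}_{k-2\psi(k)}| \le |t^{(k+1)}_{k+1-2\psi(k)}|$, and then separately shows that $k\mapsto \phi(k)$ is increasing so $\psi(k)\le\psi(k+1)$, whence $|t^{(k+1)}_{k+1-2\psi(k)}|\le|t^{(k+1)}_{k+1-2\psi(k+1)}|$ by the unimodality already established. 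Your recurrence route $t_i^{(k+1)} = 2t_{i-1}^{(k)} - t_i^{(k-1)}$, using the sign alternation $\mathrm{sign}(t_{k-2m}^{(k)}) = (-1)^m$, immediately gives $|t_i^{(k+1)}| = 2|t_{i-1}^{(k)}| + |t_i^{(k-1)}|$ for $i=k+1-2\psi(k)$, hence $\max_j|t_j^{(k+1)}|\ge 2\max_j|t_j^{(k)}|$, bypassing the $\psi$-monotonicity step entirely. Also note that the caveat you raise at the end is not actually needed: you do not need to track whether the maximizing index shifts by at most one, since the argument only produces a lower bound $|t_i^{(k+1)}|\ge \max_j|t_j^{(k)}|$ for one specific $i$ and then invokes $\max_j|t_j^{(k+1)}|\ge|t_i^{(k+1)}|$; and the sign alternation holds globally (the sign of $t_{k-2m}^{(k)}$ is $(-1)^m$ for every $m$), not merely ``near the peak.'' So the recurrence argument is fully rigorous as stated, and the ``computational fallback'' you mention is precisely what the paper does.
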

In the multivariate case we use the following notation. We let $d\mu(x)$ denote the Lebesgue measure on $[-1,1]^n$ with the function
$\prod_{i=1}^n \left(\pi\sqrt{1-x_i^2}\right)^{-1}$ as density function:
\begin{equation}\label{eqmuC}
d\mu(x)= \prod_{i=1}^n \left(\pi\sqrt{1-x_i^2}\right)^{-1} dx
\end{equation}
and we consider the following inner product for two integrable functions $f,g$ on the box $[-1,1]^n$:
$$\langle f,g\rangle =\int_{[-1,1]^n}  f(x)g(x) d\mu(x)$$
(which coincides with (\ref{eqinner}) in the univariate case $n=1$).
For $\alpha\in \oN^n$, we define the multivariate Chebyshev polynomial
$$T_\alpha(x)=\prod_{i=1}^n T_{\alpha_i}(x_i) \ \text{ for } x\in \oR^n.$$
The multivariate Chebyshev polynomials satisfy the following orthogonality relationships:
\begin{equation}\label{eqorthomulti}
\langle T_\alpha,T_\beta\rangle = \left(\frac{1}{2}\right)^{|\Supp(\alpha)|}  \delta_{\alpha,\beta}
\end{equation}
and, for any $r\in \oN$,  the set of Chebyshev polynomials $\{T_\alpha(x): |\alpha|\le r\}$ is a basis of the space of $n$-variate polynomials of degree at most $r$.

\subsection{Jackson kernels}\label{secJacksonkernel}

A classical problem in approximation theory is to find a best (uniform)  approximation of a given continuous function $f:[-1,1]\rightarrow \oR$   by a polynomial of given maximum degree $r$.
Following \cite{KPM}, a possible approach is to take the convolution $f_\KPM^{(r)}$ of $f$ with a  kernel function of the form
$$K_r(x,y)= \frac{1}{\pi \sqrt{1-x^2}\pi \sqrt{1-y^2}} \left(g^r_0T_0(x)T_0(y)+2\sum_{k=1}^r g_k^r T_k(x)T_k(y)\right),$$ where $r\in \oN$ and
the coefficients $g_k^r$ are selected so that the following properties hold:
\begin{itemize}
\item[(1)] The kernel is positive: $K_r(x,y)>0$ for all $x,y\in [-1,1]$.
\item[(2)] The kernel is normalized: $g^r_0=1$.
\item[(3)]  The second  coefficients $g^r_1$ tend to 1 as $r\rightarrow \infty$.
\end{itemize}
The function $f_\KPM^{(r)}$ is then defined by
\begin{equation}\label{eqfKPM}
f_\KPM^{(r)}(x)=\int_{-1}^1 \pi \sqrt{1-y^2} K_r(x,y) f(y) dy.
\end{equation}
\ignore{
 We consider the approximation of a continuous function $f:[-1,1] \rightarrow \mathbb{R}$
given by a Hilbert-Schmidt integral operator of the type:
\[
f \mapsto \int_{-1}^1 f(x)K(x,y) dx,
\]
where $K(x,y) >0$ on $[-1,1]^2$, and $dx$ is the Lebesgue measure.
The function $K(x,y)$ is called a kernel function, and we will only consider kernels
of the type
\[
K(x,y) = \phi_0(x)\phi_0(y) + 2 \sum_{k=1}^r g_k \phi_k(x)\phi_k(y),
\]
where $r > 0$ is fixed and the $g_k$s are suitable  scalars such that $K(x,y) >0$ on $[-1,1]^2$.
}
As  the first coefficient  is $g^r_0=1$,   the kernel is normalized:
$\int_{-1}^1 K_r(x,y)dy = T_0(x)/\pi\sqrt{1-x^2}$, and we have: $\int_{-1}^1 f_\KPM^{(r)}(x)dx =\int_{-1}^1 f(x) dx.$
The positivity of the kernel $K_r$ implies that   the integral operator $f\mapsto f_\KPM^{(r)}$ is a positive linear  operator, i.e.,\ a linear operator that maps
the set of nonnegative integrable functions on $[-1,1]$ into itself. Thus the general (Korovkin) convergence theory of
positive linear operators applies and  one may conclude the uniform convergence result
\[
\lim_{r \rightarrow \infty} \|f - f^{(r)}_{KPM}\|^\epsilon_\infty = 0
\]
for any $\epsilon>0$, where  $\|f - f^{(r)}_{KPM}\|^\epsilon_\infty =\max_{-1+\epsilon \le x\le 1-\epsilon} |f(x)-f_\KPM^{(r)}(x)|$. (One needs to restrict the range to subintervals of $[-1,1]$ because of the denominator in the kernel $K_r$.)

In what follows we select  the following  parameters $g^r_k$ for $k=1,\ldots,r$, which define the so-called {Jackson kernel},
again denoted by  $K_r(x,y)$:
\begin{equation}\label{eqgJn}
\begin{split}
g^{r}_k &= \tfrac{1}{r+2}\left((r+2-k) \cos (k\theta_r) + \tfrac{\sin(k\theta_r)}{\sin \theta_r}\cos \theta_r\right) \\
&= \tfrac{1}{r+2} \left((r+2-k) T_k(\cos \theta_r) + U_{k-1}(\cos \theta_r) \, \cos \theta_r\right),
\end{split}
\end{equation}
where we set $$\theta_r:=\frac{\pi}{r+2}.$$
This choice of the parameters $g^r_k$ is the one minimizing the quantity
$\int_{[-1,1]^2} K_r(x,y) (x-y)^2 dx dy,$ which ensures that the corresponding Jackson kernel is maximally peaked at $x=y$ (see \cite[\S II.C.3]{KPM}).

One may  show  that the Jackson kernel $K_r(x,y)$  is indeed positive on $[-1,1]^2$; see  \cite[\S II.C.2]{KPM}.
Moreover $g^r_0=1$ and, for $k=1$, we have $g^{r}_1= \cos(\theta_r)=\cos(\pi/(r+2)) \rightarrow 1$ if $r \rightarrow \infty$ as required.
This is in fact true for all $k$, as will follow from Lemma \ref{lemdJN} below.
Note that one has
$|g^{r}_k| \le 1$ for all $k$, since $|T_k(\cos \theta_r)| \le 1$ and $|U_{k-1}(\cos \theta_r)| \le k$.
For later use, we  now give an estimate on  the Jackson coefficients $g^{r}_k$, showing that $1-g^r_k$ is in the order $O(1/r^2)$. 

\begin{lemma} \label{lemdJN}
Let $d\ge 1$ and $r\ge d$ be given integers, and set $\theta_r = \frac{\pi}{r+2}$.
There exists a constant $C_d$ (depending only on $d$) such that the following inequalities hold:
\[
|1-g^r_k| \le C_d(1-\cos \theta_r) \le \frac{C_d\pi^2}{2(r+2)^2} \ \ \text{ for all } 0\le k\le d.
\]
For the constant $C_d$ we may take $C_d= d^2(1+2c_d)$, where
\begin{equation}\label{eqcd}
c_d=  2^{d-1-2\psi(d)}\frac{d(d-\psi(d)-1)!}{\psi(d)!(d-2\psi(d))!} \ \ \text{ and } \ \ \psi(d)=
\begin{cases}
0 & \text{for } d\leq 4,\\
\left\lceil\frac{1}{8}\left(4d-5 - \sqrt{8d^2-7}\right)\right\rceil & \text{for } d\geq 4.
\end{cases}
\end{equation}
\end{lemma}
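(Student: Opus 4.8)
The plan is to estimate $1-g^r_k$ directly from the closed-form expression \eqref{eqgJn}, arranged so that the factor $1-\cos\theta_r$ can be pulled out and the remaining polynomial factor bounded on $[-1,1]$. Fix $r\ge d\ge 1$ and $0\le k\le d$; the case $k=0$ is trivial since $g^r_0=1$ by \eqref{eqgJn} (using $T_0=1$ and $U_{-1}=0$ from \eqref{eqTnrec}--\eqref{eqUnrec}), so assume $1\le k\le d$ and write $c=\cos\theta_r\in[-1,1]$. Using $T_k(1)=1$ and $U_{k-1}(1)=k$ from \eqref{eqval1}, together with the identity $(r+2)=(r+2-k)T_k(1)+U_{k-1}(1)$, the second line of \eqref{eqgJn} rearranges to
\begin{equation*}
g^r_k-1=\frac{r+2-k}{r+2}\bigl(T_k(c)-T_k(1)\bigr)+\frac{1}{r+2}\bigl(c\,U_{k-1}(c)-U_{k-1}(1)\bigr).
\end{equation*}
Since $1\le k\le d\le r$, both scalar prefactors lie in $[0,1]$, so it suffices to bound each bracketed term by a multiple of $1-c$.

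For the first bracket I would expand in the monomial basis $T_k(x)=\sum_{i=0}^k t_i^{(k)}x^i$; the constant term cancels in $T_k(c)-T_k(1)=\sum_{i=1}^k t_i^{(k)}(c^i-1)$, and from $c^i-1=-(1-c)(1+c+\dots+c^{i-1})$ one gets $|c^i-1|\le i(1-c)\le k(1-c)$ on $[-1,1]$. Combined with Lemma~\ref{lemChebyshev} and the monotonicity of its right-hand side (so that $\max_i|t_i^{(k)}|\le c_k\le c_d$ for $k\le d$, with $c_d$ as in \eqref{eqcd}), this gives $|T_k(c)-T_k(1)|\le (1-c)\,k\sum_{i=1}^k|t_i^{(k)}|\le (1-c)\,k^2c_d\le (1-c)\,d^2c_d$. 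For the second bracket I would split $c\,U_{k-1}(c)-U_{k-1}(1)=c\bigl(U_{k-1}(c)-U_{k-1}(1)\bigr)+(c-1)U_{k-1}(1)$, bound $|U_{k-1}(c)-U_{k-1}(1)|\le (1-c)\,d^2c_d$ in exactly the same way (now using $U_{k-1}(x)=\sum_i u_i^{(k)}x^i$ and $\max_i|u_i^{(k)}|\le c_d$ from Lemma~\ref{lemChebyshev}), and use $|(c-1)U_{k-1}(1)|=k(1-c)\le d^2(1-c)$, to get $|c\,U_{k-1}(c)-U_{k-1}(1)|\le (1-c)d^2(c_d+1)$. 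Adding the two contributions yields $|1-g^r_k|\le (1-c)d^2c_d+(1-c)d^2(c_d+1)=d^2(1+2c_d)(1-\cos\theta_r)=C_d(1-\cos\theta_r)$, and the final estimate follows from $1-\cos\theta_r\le \theta_r^2/2=\pi^2/(2(r+2)^2)$. Taking $d=k$ also yields $g^r_k\to 1$ as $r\to\infty$, as announced before the lemma.

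The step I expect to be the crux --- and the reason Lemma~\ref{lemChebyshev} is needed --- is the $U_{k-1}$ term. In contrast to $T_k$, which is bounded by $1$ uniformly on $[-1,1]$, the polynomial $U_{k-1}$ takes values of size $k$ at $\pm 1$, so $c\,U_{k-1}(c)-U_{k-1}(1)$ cannot be controlled by a crude sup-norm estimate; one genuinely has to factor out the vanishing factor $1-c$ and then bound the remaining difference quotient through the magnitude of the monomial coefficients of $U_{k-1}$ (and of $T_k$), which is precisely what the appendix bound on $\max_i|u_i^{(k)}|$ and $\max_i|t_i^{(k)}|$ supplies. Everything else is bookkeeping with elementary inequalities. (As an alternative, the first bracket can instead be handled by the trigonometric identity $1-T_k(c)=1-\cos(k\theta_r)=2\sin^2(k\theta_r/2)\le 2k^2\sin^2(\theta_r/2)=k^2(1-c)$, using $|\sin(k\phi)|\le k|\sin\phi|$, which avoids Lemma~\ref{lemChebyshev} for that term.)
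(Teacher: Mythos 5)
Your proof is correct and is essentially the same argument as the paper's, organized slightly differently: the paper writes $1-g^r_k=P_k(\cos\theta_r)$ for a single polynomial $P_k$ of degree $k$ with $P_k(1)=0$, factors $P_k(x)=(1-x)Q_k(x)$, and bounds the coefficients of $Q_k$ via Lemma~\ref{lemChebyshev} to get $|Q_k(\cos\theta_r)|\le d^2(1+2c_d)$; you instead split $g^r_k-1$ into the $T_k$-difference and the $xU_{k-1}$-difference, pull the factor $(1-c)$ out of each separately using $c^i-1=(c-1)(1+c+\cdots+c^{i-1})$, and bound each piece with the same Lemma~\ref{lemChebyshev}, arriving at the identical constant $C_d=d^2(1+2c_d)$. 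The underlying idea (vanishing at $c=1$, factor out $1-c$, bound the quotient via coefficient estimates) and the dependence on Lemma~\ref{lemChebyshev} are the same; your closing remark that the $T_k$ part can alternatively be handled via $1-\cos(k\theta)\le k^2(1-\cos\theta)$ is a nice simplification of that term, though it does not remove the need for Lemma~\ref{lemChebyshev} on the $U_{k-1}$ side.
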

\begin{proof}
Define the polynomial
$$P_k(x)=1- \frac{r+2-k}{r+2} T_k(x)- \frac{1}{r+2}xU_{k-1}(x)$$
with degree $k$.
Then,  in view of  relation (\ref{eqgJn}), we have:  $1-g^r_k=P_k(\cos \theta_r)$.
Recall from  relation (\ref{eqval1}) that $T_k(1)=1$ and $U_{k-1}(1)=k$ for any $k\in \oN$. This implies that
$P_k(1)=0$ and thus we can factor $P_k(x)$ as
$P_k(x)=(1-x)Q_k(x)$ for some polynomial $Q_k(x)$ with degree $k-1$.
If we write $P_k(x)=\sum_{i=0}^k p_ix^i$, then it follows that
$Q_k(x)=\sum_{i=0}^{k-1} q_ix^i$, where the scalars $q_i$ are given by
\begin{equation}\label{eqq}
q_i= \sum_{j=0}^i p_j \ \text{ for } i=0,1,\ldots,k-1.
\end{equation}
It now suffices to observe that for any $0\le i\le k$ and $k\le d$, the  $p_i$'s are bounded by a constant depending only on $d$, which will imply that the same holds for the scalars $q_i$.
For this, set
$T_k(x)=\sum_{i=0}^k t_i^{(k)}x^i$ and $U_{k-1}(x)=\sum_{i=0}^{k-1}u_i^{(k)}x^i$. Then the coefficients $p_i$ of $P_k(x)$ can be expressed as
$$p_0= 1-\frac{r+2-k}{r+2} t_0^{(k)},\
p_i= \frac{r+2-k}{r+2} t_i^{(k)} -\frac{u_{i-1}^{(k)}}{r+2} \ (1\le i\le k).$$
For all  $0\le k\le d$ the coefficients  of the Chebyshev polynomials $T_k,U_{k-1}$ can be bounded by an absolute constant depending only on $d$. Namely,
by Lemma \ref{lemChebyshev},
$|t_i^{(k)}|, |u_{i}^{(k)}|\le c_d $ for all $0\le i\le k$ and $k\le d$, where $c_d$ is as defined in (\ref{eqcd}).
As $k\le d\le r$, we have $r+2-k\le r+2$ and thus $|p_i|\le 1+2c_d$ for all $0\le i\le k\le d$.
Moreover, using (\ref{eqq}), $|q_i|\le d(2c_d+1)$ for all $0\le i\le k-1$.
Putting things together we can now derive
$1-g^r_k=(1-\cos \theta_r) Q_k(\cos \theta_r)$, where $Q_k(\cos \theta_r)=\sum_{i=0}^{k-1}q_i (\cos \theta_r)^i$,
so that  $|Q_k(\cos \theta_r)|\le \sum_{i=0}^{k-1}|q_i| \le d^2(1+2c_d)$.
This implies
$|1-g^r_k| \le (1-\cos \theta_r) C_d$, after setting $C_d =d^2(1+2c_d).$
Finally, combining this with  the fact that $1-\cos x \le \frac{x^2}{2}$ for all $x\in [0,\pi]$, we obtain the desired  inequality from the lemma statement.
\qed\end{proof}

\subsection{Jackson kernel approximation of the Dirac delta function}\label{secJacksonapprox}

If one approximates the Dirac delta function $\delta_{x^*}$ at a given point $x^*\in [-1,1]$ by taking its convolution with the Jackson kernel $K_r(x,y)$,
 then the result is the function
\[
\dJN(x-x^*) = \frac{1}{\pi\sqrt{1-x^2}} \left(1 + 2 \sum_{k=1}^{r} g^r_k T_k(x)T_k(x^*)\right);
\]
see \cite[eq. (72)]{KPM}. As mentioned in \cite[eq. (75)--(76)]{KPM}, the function $\dJN$ is in fact a good approximation
to the Gaussian density:
\begin{equation}
\label{Gaussian}  \dJN(x-x^*) \approx  \frac{1}{\sqrt {2\pi\sigma^2}}\mbox{exp}\left(-\frac{(x-x^*)^2}{2\sigma^2}\right)
\mbox{ with }
\sigma^2 \simeq  \left(\frac{\pi}{r+1}\right)^2\left[1-{x^*}^2+ \frac{3{x^*}^2-2}{r+1} \right].
\end{equation}
(Recall that the Dirac delta measure may be defined as a limit of the Gaussian measure when $\sigma \downarrow 0$.)
This approximation is illustrated in Figure \ref{fig:Gaussian approx} for several values of $r$.
\begin{figure}[h!]
\begin{center}
\includegraphics[width=0.5\textwidth]{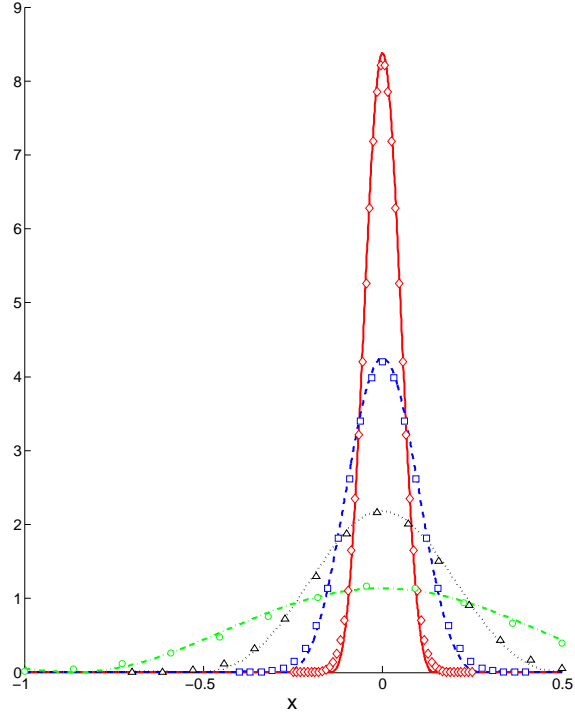}
  \caption{The Jackson kernel approximation $\dJN$ to the Dirac delta function at $x^* = 0$ for $r = 8,16,32,64$. The corresponding scatterplots show the
  values of the Gaussian density function in (\ref{Gaussian}) with $x^* = 0$. \label{fig:Gaussian approx}}
\end{center}
\end{figure}

By construction, the function $\dJN(x-x^*)$ is nonnegative over $[-1,1]$ and  we have the normalization $\int_{-1}^1 \dJN(x-x^*)dx=\int_{-1}^1 \delta_{x^*}(x)dx=1$ (cf. Section \ref{secJacksonkernel}). Hence,
it is a probability density function on $[-1,1]$ for the Lebesgue measure. It is convenient to consider the following univariate polynomial
  \begin{equation}\label{eqhdelta}
 h_r(x)= 1 + 2 \sum_{k=1}^r g^r_k T_k(x)T_k(x^*),
 \end{equation}
 so that $ \dJr(x-x^*) = \frac{1}{\pi\sqrt{1-x^2}} h_r(x).$
  The following facts follow directly, which we will use below for the convergence analysis of the new bounds $f^{(r)}$.

\begin{lemma}\label{lemdJr}
 For any  $r\in \oN$ the polynomial $h_r$ from (\ref{eqhdelta}) is nonnegative over $[-1,1]$ and $\int_{-1}^1 h_r(x) \frac{dx}{\pi\sqrt{1-x^2}}=1$. In other words,
$h_r$   is a probability density function
for the measure $ \left(\pi \sqrt{1-x^2} \right)^{-1}dx$ on $[-1,1]$. 
\end{lemma}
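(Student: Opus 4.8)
The statement to prove, Lemma \ref{lemdJr}, asserts two things about $h_r(x) = 1 + 2\sum_{k=1}^r g^r_k T_k(x) T_k(x^*)$: nonnegativity on $[-1,1]$, and that $\int_{-1}^1 h_r(x)\frac{dx}{\pi\sqrt{1-x^2}} = 1$.

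The plan is to obtain both facts directly from properties already established for the Jackson kernel in Section \ref{secJacksonkernel}. For the normalization, I would recall that $h_r(x) = \pi\sqrt{1-x^2}\,\dJr(x-x^*)$, so $\int_{-1}^1 h_r(x)\frac{dx}{\pi\sqrt{1-x^2}} = \int_{-1}^1 \dJr(x-x^*)\,dx$; and $\dJr(x-x^*)$ is the convolution of the Dirac delta $\delta_{x^*}$ with the Jackson kernel $K_r$, which by the normalization property $g^r_0 = 1$ established in Section \ref{secJacksonkernel} satisfies $\int_{-1}^1 \dJr(x-x^*)\,dx = \int_{-1}^1 \delta_{x^*}(x)\,dx = 1$. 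Alternatively, and perhaps more cleanly, I would integrate term by term using the orthogonality relations (\ref{eqortho}): $\int_{-1}^1 1\cdot\frac{dx}{\pi\sqrt{1-x^2}} = \langle T_0, T_0\rangle = 1$, while for each $k\ge 1$, $\int_{-1}^1 T_k(x)\frac{dx}{\pi\sqrt{1-x^2}} = \langle T_k, T_0\rangle = 0$, so all the sum terms vanish and only the constant $1$ survives.

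For nonnegativity, the key input is the positivity of the Jackson kernel $K_r(x,y) > 0$ on $[-1,1]^2$, which is cited from \cite[\S II.C.2]{KPM} in Section \ref{secJacksonkernel}. Writing out $\dJr(x-x^*) = \pi\sqrt{1-{x^*}^2}\,K_r(x,x^*)$, one sees that for $x^* \in (-1,1)$ this is strictly positive on $(-1,1)$, hence $h_r(x) = \pi\sqrt{1-x^2}\,\dJr(x-x^*) \ge 0$ on $[-1,1]$ (the boundary points $x = \pm 1$ follow by continuity of the polynomial $h_r$). For the edge cases $x^* = \pm 1$ one can either invoke the limiting argument or check directly. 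So the argument is essentially a two-line deduction from the cited positivity of $K_r$ together with the bookkeeping relating $h_r$, $\dJr$, and $K_r$.

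There is no real obstacle here: the lemma is explicitly flagged in the text as a collection of facts that "follow directly" from the construction, and all the heavy lifting (positivity of the Jackson kernel, the normalization from $g^r_0 = 1$) has already been imported or proved. The only thing requiring a small amount of care is the handling of the weight factor $\sqrt{1-x^2}$ at the endpoints $x = \pm 1$, where $\dJr$ has a singularity but $h_r$ does not — this is resolved by noting $h_r$ is a polynomial, hence continuous, so nonnegativity on the open interval extends to the closed one. The term-by-term orthogonality computation for the normalization is entirely routine.
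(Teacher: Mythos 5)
Your proposal is correct and follows the paper's own argument: the paper also derives nonnegativity from the positivity of the Jackson kernel $K_r$ cited from [KPM, \S II.C.2], and the normalization from $g^r_0 = 1$ (equivalently, from the orthogonality of the Chebyshev polynomials), presenting the lemma as following "directly" from the preceding discussion. Your term-by-term orthogonality computation for the normalization and your explicit handling of the endpoint cases (via continuity of the polynomial $h_r$) are harmless elaborations of the same approach.
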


%
%
%

\section{Convergence analysis}\label{secconvergence}

In this section we analyze the convergence rate of the new bounds $f^{(r)}$ and we show the result from Theorem \ref{theodJNn}.
We will first consider the univariate case in Section \ref{secunivariate} (see Theorem \ref{theounivariate}) and then the general multivariate case in Section \ref{secmultivariate} (see Theorem \ref{theomultivariate}).
As we will see, the polynomial $h_r$ arising from the Jackson kernel approximation of the Dirac delta function, introduced above in relation (\ref{eqhdelta}),
 will play a key role in the convergence analysis.


\subsection{ The univariate case}\label{secunivariate}

We consider a univariate polynomial $f$ and let $x^*$ be a global minimizer of $f$ in $[-1,1]$.
As observed in Lemma \ref{lemdJr} the polynomial $h_r$ from (\ref{eqhdelta}) is a density function for the measure $\frac{dx}{ \pi\sqrt{1-x^2}}$.
The key observation now is that the polynomial $h_r$ admits a Schm\"udgen-type representation, of the form
$\sigma(x)+\sigma_1(x) (1-x^2)$ with $\sigma_0,\sigma_1$ sums-of-squares polynomials,  since it is non-negative over $[-1,1]$.  This fact  will allow us to use the polynomial $h_r$ to get feasible solutions for the program defining the bound $f^{(r)}$. It
follows from the following
classical result
(see, e.g., \cite{PR}), 
 that characterizes  univariate polynomials that are nonnegative on $[-1,1]$.
(Note that this is a strengthening of  Schm\"udgen's theorem (Theorem \ref{thm:Schmuedgen}) in the univariate case.)
\begin{theorem}[Fekete, Markov-Luk\`acz]
\label{theoFekete}
 Let $p(x)$ be a univariate polynomial of degree
$m$. Then $p(x)$ is nonnegative on the interval $[-1,1]$ if and
only if it has the following representation:
\begin{equation*}
p(x)=\sigma_0(x)+(1-x^2)\sigma_1(x)
\end{equation*}
for some sum-of-squares polynomials $\sigma_0$ of degree $2\lceil m/2\rceil$ and $\sigma_1$ of degree $2\lceil m/2\rceil -2$.
\end{theorem}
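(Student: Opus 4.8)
The plan is to prove both implications, with essentially all the content in the ``only if'' direction and, within that, in the degree bookkeeping. \emph{The easy direction} is immediate: if $p=\sigma_0+(1-x^2)\sigma_1$ with $\sigma_0,\sigma_1$ sums of squares, then for $x\in[-1,1]$ each of $\sigma_0(x)$, $\sigma_1(x)$, $1-x^2$ is nonnegative, so $p(x)\ge 0$.

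\emph{Membership in the quadratic module.} For the converse I would first forget the degree bounds and show $p$ lies in $M:=\Sigma[x]+(1-x^2)\Sigma[x]$. Two structural facts make this work: $M$ contains all squares and the polynomial $1-x^2$, and $M$ is closed under multiplication, since
\[\big(\sigma_0+(1-x^2)\sigma_1\big)\big(\tau_0+(1-x^2)\tau_1\big)=\big(\sigma_0\tau_0+(1-x^2)^2\sigma_1\tau_1\big)+(1-x^2)\big(\sigma_0\tau_1+\sigma_1\tau_0\big)\]
and $(1-x^2)^2$ is a square. I would then record the elementary generator identities $1\pm x=\tfrac12(1\pm x)^2+\tfrac12(1-x^2)\in M$, and for $a\ge1$
\[a-x=\Big(\tfrac a2(x^2+1)-x\Big)+\tfrac a2(1-x^2)\in M,\]
where $\tfrac a2(x^2+1)-x$ has discriminant $1-a^2\le 0$ and so is a sum of squares; symmetrically $x-b\in M$ for $b\le -1$, and a complex-conjugate pair of roots gives a factor $(x-\alpha)^2+\beta^2\in\Sigma[x]$. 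Now factor $p$ over $\oR$: since $p\ge 0$ on $[-1,1]$, any real root in the open interval $(-1,1)$ has even multiplicity. Grouping those roots together with the complex pairs into a square and writing each remaining linear factor as one of the generators $1\pm x$, $a-x\ (a\ge1)$, $x-b\ (b\le-1)$, up to sign, multiplicativity of $M$ gives $p=c\cdot q$ with $q\in M$, $q\not\equiv0$, and $c\in\oR$; evaluating at a point of $[-1,1]$ where neither $p$ nor $q$ vanishes forces $c\ge 0$, so $p\in M$ (the case $p\equiv0$ being trivial).

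\emph{The degree bound (the main obstacle).} Obtaining exactly $\deg\sigma_0\le 2\lceil m/2\rceil$ and $\deg\sigma_1\le 2\lceil m/2\rceil-2$ is the delicate point, and I would handle it by induction on $m=\deg p$, exploiting that $b(m):=2\lceil m/2\rceil$ satisfies $b(m)-b(m-2)=2$ for all $m\ge2$ and $b(m)-b(m-1)=2$ for odd $m$. The base cases $m\le1$ are direct: for $m=1$, $\alpha x+\beta$ with $\beta\ge|\alpha|$ equals $\big(\tfrac\beta2 x^2+\alpha x+\tfrac\beta2\big)+\tfrac\beta2(1-x^2)$, whose quadratic part has discriminant $\alpha^2-\beta^2\le0$; this is also the source of the at-first-sight surprising bound $\deg\sigma_0=m+1$ allowed for odd $m$, where the top-degree terms of $\sigma_0$ and $(1-x^2)\sigma_1$ cancel. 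For $m\ge2$ I would peel off from $p$ a factor that is nonnegative on $[-1,1]$, of degree $2$ or (when $m$ is odd) $1$: namely $(x-\rho)^2$ if $p$ has an interior root $\rho$; $(1\mp x)^2$ if $p$ has a double root at an endpoint; the product of two real linear factors with roots in $\{-1,1\}\cup(\oR\setminus[-1,1])$ when at least two such roots are present; a complex-conjugate-pair quadratic if $p$ has no real roots; or, when $m$ is odd and exactly one simple root lies in $\{-1,1\}\cup(\oR\setminus[-1,1])$, that single linear factor. In each case the cofactor $\tilde p$ is again nonnegative on $[-1,1]$ of the appropriate smaller degree, and substituting the representation of $\tilde p$ supplied by the induction hypothesis and expanding with the generator identities above --- in particular using $(1-x)(a-x)=\tfrac{a+1}{2}(1-x)^2+\tfrac{a-1}{2}(1-x^2)$ in the mixed endpoint/exterior case --- produces a representation of $p$. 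The step I expect to require the most care is precisely this degree accounting: one must verify, case by case, that the partial cancellations of top-degree terms are exactly enough to keep $\sigma_0$ within degree $2\lceil m/2\rceil$ and $\sigma_1$ within $2\lceil m/2\rceil-2$, which is exactly what the identities $b(m)-b(m-2)=2$ and $b(m)-b(m-1)=2$ are tailored to guarantee.
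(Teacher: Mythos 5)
First, a framing point: the paper does not prove this theorem. It is stated as a classical result with a pointer to Powers--Reznick \cite{PR}, where the argument proceeds via Goursat's substitution $x=\frac{1-t^2}{1+t^2}$ reducing to nonnegativity on all of $\oR$ (the related proof in Szeg\H{o}/Rivlin uses Fej\'er--Riesz on $p(\cos\theta)$). Your factor-and-induct approach is therefore genuinely different from what the cited source does. That said, as written your argument has a concrete gap in the degree bookkeeping, which is exactly the part you flag as the ``main obstacle.''

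The gap is in the inductive step where you peel off ``the product of two real linear factors with roots in $\{-1,1\}\cup(\oR\setminus[-1,1])$.'' To make the degree count close, the peeled quadratic $q$ must itself admit a representation $q=\tau_0+(1-x^2)\tau_1$ with $\deg\tau_0\le 2$ and $\tau_1$ a \emph{nonnegative constant}; then, combining with the inductive representation $\tilde p=\tilde\sigma_0+(1-x^2)\tilde\sigma_1$ of the degree-$(m-2)$ cofactor via the multiplicativity formula gives $\sigma_0=\tau_0\tilde\sigma_0+(1-x^2)^2\tau_1\tilde\sigma_1$ and $\sigma_1=\tau_0\tilde\sigma_1+\tau_1\tilde\sigma_0$ with the advertised degrees. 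But the only tools you supply for producing such a $(\tau_0,\tau_1)$ for $q$ are the single-generator identities and the one mixed identity $(1-x)(a-x)=\frac{a+1}{2}(1-x)^2+\frac{a-1}{2}(1-x^2)$. Multiplying the generator representations of $a-x$ and $b-x$ via the product rule yields a $\tau_0$ of degree $4$ and a $\tau_1$ of degree $2$ --- two degrees too high in each slot, and no cancellation of leading terms occurs (the leading coefficient of the resulting $\tau_0$ is $\tfrac{a}{2}\cdot\tfrac{b}{2}+\tfrac{a}{2}\cdot\tfrac{b}{2}\ne 0$). So the exterior--exterior case is not covered by what you write down.

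The fix is a separate lemma (essentially the $m=2$ case of the theorem itself): every quadratic $q\ge 0$ on $[-1,1]$ can be written as $q=\tau_0+(1-x^2)\tau_1$ with $\tau_0$ a nonnegative quadratic and $\tau_1\ge 0$ a constant. Concretely, writing $q=\alpha x^2+\beta x+\gamma$ and choosing $\tau_1=c:=\max\bigl(0,\tfrac{\gamma-\alpha}{2}\bigr)$ gives $\tau_0=(\alpha+c)x^2+\beta x+(\gamma-c)$ with discriminant $\beta^2-4(\alpha+c)(\gamma-c)$; at $c=\tfrac{\gamma-\alpha}{2}$ this equals $\beta^2-(\alpha+\gamma)^2\le 0$ because $q(\pm1)\ge 0$ forces $\alpha+\gamma\ge|\beta|$, and when $\tfrac{\gamma-\alpha}{2}<0$ the polynomial $q$ is already globally nonnegative (hence SOS) so $c=0$ works. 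Equivalently, for the exterior--exterior factor one can record the explicit identity
\[
(a-x)(b-x)=\Bigl[\tfrac{ab+1}{2}x^2-(a+b)x+\tfrac{ab+1}{2}\Bigr]+\tfrac{ab-1}{2}(1-x^2),\qquad a,b\ge 1,
\]
whose bracketed quadratic has discriminant $(a+b)^2-(ab+1)^2=-(a^2-1)(b^2-1)\le0$, with the analogous signed versions when the two roots lie on opposite sides of $[-1,1]$ or both below $-1$. With this lemma in hand, your induction closes; without it, the step where you appeal to ``expanding with the generator identities above'' does not deliver the claimed degree $2\lceil m/2\rceil$. A secondary, smaller point: the case list should read ``exactly one root outside $(-1,1)$ counting multiplicity'' rather than ``exactly one simple root,'' and for exhaustiveness you should note that when $m$ is even and there are no interior roots the number of roots outside $(-1,1)$ (with multiplicity) is even, so either two can be peeled or there are none and one peels a conjugate-pair quadratic.
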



\noindent
We start with the following technical lemma.

%

\begin{lemma}\label{lemtech1}
Let $f$ be a polynomial of degree $d$ written in the Chebyshev basis as $f=\sum_{k=0}^df_kT_k$,
let $x^*$ be a global minimizer of $f$ in $[-1,1]$, and let $h_r$ be the polynomial from (\ref{eqhdelta}).
For any integer $r\ge d$ we have
$$\int_{-1}^1 f(x)h_r(x)\frac{dx}{\pi\sqrt{1-x^2}}  - f(x^*) \le\frac{C_f}{(r+2)^2},$$
where $C_f= (\sum_{k=1}^d |f_k|) \frac{C_d\pi^2}{2}$ and $C_d$ is the constant from Lemma \ref{lemdJN}.
\end{lemma}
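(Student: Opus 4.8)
The plan is to exploit the orthogonality of the Chebyshev polynomials with respect to the inner product $\langle f,g\rangle = \int_{-1}^1 f(x)g(x)\frac{dx}{\pi\sqrt{1-x^2}}$, together with the explicit formula for $h_r$ in (\ref{eqhdelta}). First I would expand both $f=\sum_{k=0}^d f_k T_k$ and $h_r = 1 + 2\sum_{k=1}^r g^r_k T_k(x)T_k(x^*) = T_0 + 2\sum_{k=1}^r g^r_k T_k(x^*) T_k(x)$ in the Chebyshev basis, and compute $\int_{-1}^1 f(x)h_r(x)\frac{dx}{\pi\sqrt{1-x^2}} = \langle f, h_r\rangle$ by distributing the inner product. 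Using the orthogonality relations (\ref{eqortho}), namely $\langle T_0,T_0\rangle = 1$ and $\langle T_k,T_k\rangle = \tfrac12$ for $k\ge 1$, only the diagonal terms survive. The $k=0$ term contributes $f_0$, and each $k\ge 1$ term contributes $f_k \cdot 2 g^r_k T_k(x^*) \cdot \tfrac12 = f_k g^r_k T_k(x^*)$. Hence
\[
\langle f, h_r\rangle = f_0 + \sum_{k=1}^d f_k g^r_k T_k(x^*),
\]
where the sum is truncated at $d$ because $f_k = 0$ for $k>d$ (and $r\ge d$ ensures all these $g^r_k$ are defined).

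Next I would compare this with $f(x^*) = \sum_{k=0}^d f_k T_k(x^*) = f_0 + \sum_{k=1}^d f_k T_k(x^*)$. Subtracting,
\[
\langle f, h_r\rangle - f(x^*) = \sum_{k=1}^d f_k (g^r_k - 1) T_k(x^*).
\]
Now I would bound this in absolute value: $|T_k(x^*)|\le 1$ since $x^*\in[-1,1]$, and by Lemma \ref{lemdJN} (applied with $d$ equal to the degree of $f$, and using $r\ge d$) we have $|1-g^r_k|\le \frac{C_d\pi^2}{2(r+2)^2}$ for all $k\le d$. Therefore
\[
\left|\langle f, h_r\rangle - f(x^*)\right| \le \sum_{k=1}^d |f_k|\cdot \frac{C_d\pi^2}{2(r+2)^2} = \frac{C_f}{(r+2)^2},
\]
with $C_f = \left(\sum_{k=1}^d |f_k|\right)\frac{C_d\pi^2}{2}$, which gives the claimed inequality (indeed an absolute-value version of it).

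There is essentially no serious obstacle here — the argument is a direct computation once the orthogonality of the $T_k$ and the estimate of Lemma \ref{lemdJN} are in hand. The only point requiring a moment of care is bookkeeping: making sure the factor of $2$ in the definition of $h_r$ cancels correctly against the $\tfrac12$ in $\langle T_k,T_k\rangle$ for $k\ge 1$, and that the $k=0$ term is handled separately (since $\langle T_0,T_0\rangle = 1$, not $\tfrac12$). One should also note that the lemma as stated asserts only the one-sided bound (which is all that is needed, since $h_r$ will be used as a feasible density and $f(x^*) = f_{\min}$), but the proof naturally yields the two-sided bound with $|\cdot|$; the stated inequality follows a fortiori.
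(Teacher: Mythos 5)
Your proof is correct and follows exactly the same route as the paper: expand $\langle f,h_r\rangle$ via the Chebyshev orthogonality relations to obtain $\sum_{k=0}^d f_k g^r_k T_k(x^*)$, subtract $f(x^*)=\sum_{k=0}^d f_k T_k(x^*)$, and bound the resulting sum using $|T_k(x^*)|\le 1$ together with Lemma \ref{lemdJN}. The bookkeeping of the factor $2$ against $\langle T_k,T_k\rangle=\tfrac12$ and the separate treatment of the $k=0$ term are handled correctly.
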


\begin{proof}
As $f=\sum_{k=0}^d f_kT_k$ and $h_r=1+2 \sum_{k=1}^r g_k^r T_k(x^*) T_k$,  we use the orthogonality relationships (\ref{eqortho}) to obtain
\begin{equation}\label{eqfhr1}
\int_{-1}^1 f(x)h_r(x) \frac{dx}{\pi \sqrt{1-x^2} }  = \sum_{k=0}^d f_k T_k(x^*) g^r_k.
\end{equation}
Combining with $f(x^*)=\sum_{k=0}^d f_k T_k(x^*)$ gives
\begin{equation}\label{eqfhr2}
\int_{-1}^1 f(x)h_r(x) \frac{dx}{ \pi \sqrt{1-x^2} } -f(x^*)=
\sum_{k=1}^d f_kT_k(x^*) (g_k^r-1).
\end{equation}
Now we use the upper bound on $g_k^r-1$ from Lemma \ref{lemdJN} and the bound $|T_k(x^*)|\le 1$ to conclude the proof.
\qed\end{proof}

We can now conclude the convergence analysis of the bounds $f^{(r)}$ in the univariate case.

\begin{theorem}\label{theounivariate}
Let $f=\sum_{k=0}^df_kT_k$ be a polynomial of degree $d$.
For any integer $r\ge d$ we have
$$f^{(r)}   - f_{\min}  \le\frac{C_f}{ (r+1)^2},$$
where $C_f=(\sum_{k=1}^d |f_k|) \frac{C_d\pi^2}{ 2}$ and $C_d$ is the constant from Lemma \ref{lemdJN}.
\end{theorem}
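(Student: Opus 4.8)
The plan is to use the polynomial $h_r$ from \eqref{eqhdelta} as a feasible solution to the program \eqref{fr} defining $f^{(r)}$, which immediately reduces the theorem to the estimate already recorded in Lemma \ref{lemtech1}. First I would observe that $h_r$ has degree at most $r$ and, by Lemma \ref{lemdJr}, is nonnegative on $[-1,1]$ and satisfies the normalization $\int_{-1}^1 h_r(x)\frac{dx}{\pi\sqrt{1-x^2}}=1$; since $\mu$ in \eqref{eqmu} is exactly $\left(\pi\sqrt{1-x^2}\right)^{-1}dx$ in the univariate case, $h_r$ meets the normalization constraint in \eqref{fr}. The key point is that $h_r$ admits a Schm\"udgen-type representation \eqref{eqSch}: by the Fekete / Markov-Luk\`acz theorem (Theorem \ref{theoFekete}), any univariate polynomial nonnegative on $[-1,1]$ can be written as $\sigma_0(x)+(1-x^2)\sigma_1(x)$ with $\sigma_0$ an SOS of degree $2\lceil r/2\rceil\le r+1$ and $\sigma_1$ an SOS of degree $2\lceil r/2\rceil-2\le r-1$. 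A small subtlety: to be admissible in Definition \ref{defnewbound} we need $\deg\sigma_0\le r$ and $\deg\sigma_1\le r-2$, so one should apply the theorem with the bound $\deg h_r\le r$ and, if $r$ is odd, note that $\deg h_r$ is actually at most $r$ but the ceiling gives room; more cleanly, if $h_r$ has degree $m\le r$ with $m\equiv r\pmod 2$ replaced if needed by padding, the representation has $\deg\sigma_0=2\lceil m/2\rceil\le r$ when $m\le r$ and $r,m$ have the same parity, and otherwise one absorbs the gap — this parity bookkeeping is the only place requiring care, and it is why the theorem is stated ``for $r$ large enough'' in Theorem \ref{theodJNn} (here in the univariate statement $r\ge d$ already suffices since we may always take $r$ of suitable parity or enlarge it by one at negligible cost).

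Having established feasibility of $h_r$, I would then write $f^{(r)}\le \int_{-1}^1 f(x)h_r(x)\,d\mu(x)$ by definition of the infimum, and apply Lemma \ref{lemtech1} to bound the right-hand side by $f(x^*)+\frac{C_f}{(r+2)^2}=f_{\min}+\frac{C_f}{(r+2)^2}$. Since $\frac{1}{(r+2)^2}\le\frac{1}{(r+1)^2}$, this yields $f^{(r)}-f_{\min}\le \frac{C_f}{(r+1)^2}$ with $C_f=(\sum_{k=1}^d|f_k|)\frac{C_d\pi^2}{2}$, exactly as claimed. (The slight weakening from $(r+2)^2$ to $(r+1)^2$ in the denominator is harmless and presumably chosen for uniformity with the Gaussian width estimate \eqref{Gaussian}.)

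The main obstacle, such as it is, is not analytic but bookkeeping: verifying that the degrees of the SOS multipliers $\sigma_0,\sigma_1$ in the Fekete/Markov-Luk\`acz decomposition of $h_r$ respect the degree caps $r-2|I|$ imposed in Definition \ref{defnewbound} (here $|I|\in\{0,1\}$), i.e., $\deg\sigma_0\le r$ and $\deg\sigma_1\le r-2$. This needs the observation that $\deg h_r\le r$ together with a parity argument, and it is the reason the cleaner multivariate statement (Theorem \ref{theodJNn}) carries the hypothesis ``for $r$ large enough.'' Everything else is a direct concatenation of Lemmas \ref{lemdJr}, \ref{lemtech1} and Theorem \ref{theoFekete}.

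\begin{proof}
By Lemma \ref{lemdJr}, the polynomial $h_r$ from (\ref{eqhdelta}) is nonnegative on $[-1,1]$, has degree at most $r$, and satisfies $\int_{-1}^1 h_r(x)\frac{dx}{\pi\sqrt{1-x^2}}=1$; that is, it is a probability density for the measure $\mu$ in (\ref{eqmu}) (in the univariate case $n=1$). By Theorem \ref{theoFekete} applied to $h_r$, we may write
$$h_r(x)=\sigma_0(x)+(1-x^2)\sigma_1(x)$$
for sum-of-squares polynomials $\sigma_0,\sigma_1$ with $\deg\sigma_0\le 2\lceil r/2\rceil$ and $\deg\sigma_1\le 2\lceil r/2\rceil-2$. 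Since $\deg h_r\le r$, by considering the parity of $r$ (and, if necessary, enlarging $r$ by one, which only decreases the bound we are about to prove) we may assume $\deg\sigma_0\le r$ and $\deg\sigma_1\le r-2$. Hence $h_r$ is of the Schm\"udgen-type form (\ref{eqSch}) with multiplier degrees as required in Definition \ref{defnewbound}, so $h_r$ is a feasible solution of the program (\ref{fr}). Therefore
$$f^{(r)}\le \int_{-1}^1 f(x)h_r(x)\,d\mu(x)=\int_{-1}^1 f(x)h_r(x)\frac{dx}{\pi\sqrt{1-x^2}}.$$
By Lemma \ref{lemtech1}, the right-hand side is at most $f(x^*)+\frac{C_f}{(r+2)^2}=f_{\min}+\frac{C_f}{(r+2)^2}$, where $C_f=(\sum_{k=1}^d|f_k|)\frac{C_d\pi^2}{2}$. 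Since $(r+2)^2\ge (r+1)^2$, we conclude $f^{(r)}-f_{\min}\le \frac{C_f}{(r+1)^2}$.
\qed\end{proof}
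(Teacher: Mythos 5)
Your strategy is the same as the paper's: take $h_r$ from (\ref{eqhdelta}) as a candidate density for (\ref{fr}), invoke Theorem \ref{theoFekete} for the Schm\"udgen decomposition, and apply Lemma \ref{lemtech1}. The gap is in your parity step, and it is not merely bookkeeping. For $r$ odd with $\deg h_r = r$ (the generic case), Theorem \ref{theoFekete} gives $\deg\sigma_0 \le 2\lceil r/2\rceil = r+1$, which exceeds the cap $r - 2|I| = r$ imposed in Definition \ref{defnewbound}, so $h_r$ is feasible for the degree-$(r+1)$ program but \emph{not} for $f^{(r)}$. Your sentence ``by considering the parity of $r$ (and, if necessary, enlarging $r$ by one \ldots) we may assume $\deg\sigma_0\le r$ and $\deg\sigma_1\le r-2$'' is not a valid inference: there is no padding or absorption trick that makes $h_r$ itself degree-$r$-feasible when $r$ is odd, and raising $r$ changes which program you are bounding.

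The paper's actual handling is to switch candidates by parity. For $r$ even, $h_r$ has even degree $\le r$, the multipliers have degrees $\le r$ and $\le r-2$, so $h_r$ is feasible for $f^{(r)}$, and Lemma \ref{lemtech1} yields $f^{(r)} - f_{\min} \le C_f/(r+2)^2$. For $r$ odd, one uses $h_{r-1}$ instead: its degree $r-1$ is even, Theorem \ref{theoFekete} gives $\deg\sigma_0\le r-1\le r$ and $\deg\sigma_1\le r-3\le r-2$, so $h_{r-1}$ \emph{is} feasible for $f^{(r)}$, and Lemma \ref{lemtech1} applied at level $r-1$ gives $f^{(r)} - f_{\min} \le C_f/(r+1)^2$. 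Taking the worse of the two bounds gives $C_f/(r+1)^2$ for all $r$. In particular, the $(r+1)^2$ denominator is not a cosmetic weakening of $(r+2)^2$, as your final line suggests; it is exactly what the odd-$r$ case forces.
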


\begin{proof}
Using  the degree bounds in Theorem \ref{theoFekete} for the sum-of-squares polynomials entering the decomposition of the polynomial $h_r$, we can conclude that for $r$ even, $h_r$ is feasible for the program  defining the parameter $f^{(r)}$ and for $r$ odd,   $h_r$ is feasible for the program defining the parameter $f^{(r+1)}$.
Setting $C_f= (\sum_{k=1}^d |f_k|) \frac{C_d\pi^2}{ 2}$ and using Lemma \ref{lemtech1}, this implies:
$f^{(r)}-\fmin \le \frac{C_f}{ (r+2)^2}$ for $r$ even, and
$f^{(r)}-\fmin \le \frac{C_f}{ (r+1)^2}$ for odd $r$.
The result of the theorem now follows.  \qed \end{proof}


\subsection{The multivariate case}\label{secmultivariate}

%

We consider now a multivariate polynomial $f$ and we let $x^* = (x^*_1,\dotsc,x^*_n) \in [-1,1]^n$ denote a global minimizer of $f$ on $[-1,1]^n$, i.e., $f(x^*) = f_{\min}$.
\newcommand{\ur}{{\underline r}}

In order to obtain a feasible solution to the program defining the parameter $f^{(r)}$ we will consider products of the univariate polynomials $h_r$ from (\ref{eqhdelta}).
Namely, given integers $r_1,\ldots,r_n \in \oN$ we define the $n$-tuple $\ur=(r_1,\ldots,r_n)$ and the $n$-variate polynomial
\begin{equation}\label{eqHdelta}
H_{\ur}(x_1,\ldots,x_n)=\prod_{i=1}^n h_{r_i}(x_i).
\end{equation}
We group in the next lemma some properties of the polynomial $H_{\ur}$.
\begin{lemma}\label{lemHr}
The polynomial $H_\ur$ satisfies the following properties:
\begin{itemize}
\item[(i)] $H_{\ur}$ is non-negative on $[-1,1]^n$.
\item[(ii)] $\int_{[-1,1]^n}H_{\ur}(x) d\mu(x)=1$, where $d\mu$ is the measure from (\ref{eqmu}).
\item[(iii)]
$H_{\ur}$ has a Schm\"udgen-type representation of the form $H_{\ur}(x)=\sum_{I\subseteq [n]}\sigma_I(x)\prod_{i\in I}(1-x_i^2)$, where each $\sigma_I$ is a sum-of-squares polynomial of degree at most $2\sum_{i=1}^n \lceil r_i/2\rceil -2|I|$.
\end{itemize}
\end{lemma}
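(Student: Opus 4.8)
The plan is to establish the three properties of $H_{\ur}$ by reducing each to the corresponding univariate fact about $h_{r_i}$ already available from Lemma \ref{lemdJr} and Theorem \ref{theoFekete}, exploiting the product structure of $H_{\ur}$ and of the measure $\mu$.

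For (i), since $H_{\ur}(x)=\prod_{i=1}^n h_{r_i}(x_i)$ and each $h_{r_i}$ is nonnegative on $[-1,1]$ by Lemma \ref{lemdJr}, the product is nonnegative on $[-1,1]^n$; this is immediate. For (ii), I would use that the measure $\mu$ from (\ref{eqmu}) factors as $d\mu(x)=\prod_{i=1}^n \left(\pi\sqrt{1-x_i^2}\right)^{-1}dx_i$, so by Fubini the integral $\int_{[-1,1]^n}H_{\ur}\,d\mu$ splits as the product $\prod_{i=1}^n \int_{-1}^1 h_{r_i}(x_i)\left(\pi\sqrt{1-x_i^2}\right)^{-1}dx_i$, and each factor equals $1$ by the normalization in Lemma \ref{lemdJr}. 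Hence the product is $1$.

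For (iii), the key step is to take, for each $i$, the Fekete/Markov-Luk\`acz decomposition of Theorem \ref{theoFekete}: since $h_{r_i}$ is nonnegative on $[-1,1]$ and has degree $r_i$, we can write $h_{r_i}(x_i)=\sigma_0^{(i)}(x_i)+(1-x_i^2)\sigma_1^{(i)}(x_i)$ where $\sigma_0^{(i)}$ is a sum of squares of degree $2\lceil r_i/2\rceil$ and $\sigma_1^{(i)}$ is a sum of squares of degree $2\lceil r_i/2\rceil-2$. Multiplying these $n$ decompositions together and expanding, each term in the product selects, for a subset $I\subseteq[n]$, the factor $(1-x_i^2)\sigma_1^{(i)}(x_i)$ for $i\in I$ and $\sigma_0^{(i)}(x_i)$ for $i\notin I$; collecting the $(1-x_i^2)$ factors gives $\prod_{i\in I}(1-x_i^2)$ times a product of sums of squares (one per coordinate), which is itself a sum of squares $\sigma_I(x)$. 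Its degree is $\sum_{i\in I}\left(2\lceil r_i/2\rceil-2\right)+\sum_{i\notin I}2\lceil r_i/2\rceil = 2\sum_{i=1}^n\lceil r_i/2\rceil-2|I|$, as claimed. This yields exactly the Schm\"udgen-type form $H_{\ur}=\sum_{I\subseteq[n]}\sigma_I\prod_{i\in I}(1-x_i^2)$.

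None of the three parts presents a serious obstacle; the only point requiring a little care is that a product of sums-of-squares polynomials in disjoint sets of variables is again a sum of squares (true since the set of SOS polynomials is closed under multiplication), and bookkeeping the degree count in (iii) so that it matches the stated bound $2\sum_{i=1}^n\lceil r_i/2\rceil-2|I|$. The ultimate purpose, to be used afterwards, is that choosing all $r_i$ equal makes $H_{\ur}$ an admissible density for the program (\ref{fr}) defining $f^{(r)}$, with degree controlled, so that the univariate estimate of Lemma \ref{lemtech1} can be bootstrapped to the multivariate bound of Theorem \ref{theodJNn}.
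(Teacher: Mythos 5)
Your proof is correct and follows exactly the same route as the paper, which simply states that (i) and (ii) are immediate from Lemma \ref{lemdJr} and that (iii) follows by applying Theorem \ref{theoFekete} to each factor $h_{r_i}$ and multiplying out. You have merely spelled out the bookkeeping (Fubini for (ii), the expansion over subsets $I$ and the degree count for (iii)) that the paper leaves implicit.
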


\begin{proof}
(i) and (ii) follow directly from the corresponding properties of the univariate polynomials $h_{r_i}$, and (iii) follows using Theorem \ref{theoFekete} applied to the polynomials $h_{r_i}$.
\qed\end{proof}

The next lemma is the analog of Lemma \ref{lemtech1} for the multivariate case.

\begin{lemma}\label{lemtech3}
Let $f$ be a multivariate polynomial of degree $d$, written in the basis of multivariate Chebyshev polynomials  as
$f=\sum_{\alpha\in\oN^n:|\alpha|\le d} f_\alpha T_\alpha$, and let $x^*$ be a global minimizer of $f$ in $[-1,1]^n$.
Consider $\ur=(r_1,\ldots,r_n)$, where each $r_i$ is an integer satisfying $r_i\ge d$, and the polynomial $H_{\ur}$ from (\ref{eqHdelta}). We have
$$\int_{[-1,1]^n} f(x)H_{\ur}(x)d\mu(x) - f(x^*)\le C_f \sum_{i=1}^n \frac{1}{ (r_i+2)^2},$$
where $C_f=(\sum_{\alpha: |\alpha|\le d} |f_\alpha|)\frac{C_d\pi^2}{ 2}$ and $C_d$ is the constant from Lemma \ref{lemdJN}.
\end{lemma}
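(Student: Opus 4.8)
The plan is to reduce the multivariate estimate to the univariate one that is already in hand, by expanding everything in the multivariate Chebyshev basis and using the product structure of $H_{\ur}$ and the tensorized orthogonality relations \eqref{eqorthomulti}.

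First I would record that, since each $h_{r_i}(x_i)=1+2\sum_{k=1}^{r_i}g^{r_i}_kT_k(x_i^*)T_k(x_i)$, the product $H_{\ur}(x)=\prod_{i=1}^n h_{r_i}(x_i)$ expands as $\sum_{\alpha}\big(\prod_{i=1}^n 2^{[\alpha_i\ge 1]}g^{r_i}_{\alpha_i}T_{\alpha_i}(x_i^*)\big)T_\alpha(x)$, where the sum is over $\alpha$ with $\alpha_i\le r_i$ for all $i$; write $G_\alpha:=\prod_{i=1}^n g^{r_i}_{\alpha_i}$ and note $G_\alpha\in[0,1]$ by the bound $|g^r_k|\le 1$ from Section~\ref{secJacksonkernel} (and in fact $G_\alpha\ge 0$ since the $g^r_k$ are nonnegative when $r\ge k$, or one can simply use that $H_{\ur}\ge 0$). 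Then, using \eqref{eqorthomulti}, for each $\alpha$ with $|\alpha|\le d\le r_i$ we get $\langle T_\alpha, H_{\ur}\rangle = G_\alpha T_\alpha(x^*)$, hence
\begin{equation*}
\int_{[-1,1]^n} f(x)H_{\ur}(x)\,d\mu(x)=\sum_{\alpha:|\alpha|\le d} f_\alpha T_\alpha(x^*)\,G_\alpha.
\end{equation*}
Subtracting $f(x^*)=\sum_{\alpha:|\alpha|\le d} f_\alpha T_\alpha(x^*)$ and using $|T_\alpha(x^*)|=\prod_i|T_{\alpha_i}(x_i^*)|\le 1$ gives
\begin{equation*}
\int_{[-1,1]^n} f(x)H_{\ur}(x)\,d\mu(x)-f(x^*)\le \sum_{\alpha:|\alpha|\le d}|f_\alpha|\,(1-G_\alpha).
\end{equation*}

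The remaining point is to bound $1-G_\alpha$ for $|\alpha|\le d$. Here I would use the elementary telescoping inequality: for numbers $a_1,\dots,a_n\in[0,1]$ one has $1-\prod_{i=1}^n a_i\le \sum_{i=1}^n(1-a_i)$ (proved by induction, since $1-ab=(1-a)+a(1-b)\le(1-a)+(1-b)$ for $a,b\in[0,1]$). Applying this with $a_i=g^{r_i}_{\alpha_i}$ yields $1-G_\alpha\le\sum_{i=1}^n(1-g^{r_i}_{\alpha_i})$. Since $\alpha_i\le|\alpha|\le d$, Lemma~\ref{lemdJN} (with the same $d$) gives $1-g^{r_i}_{\alpha_i}\le \frac{C_d\pi^2}{2(r_i+2)^2}$, so $1-G_\alpha\le \frac{C_d\pi^2}{2}\sum_{i=1}^n\frac{1}{(r_i+2)^2}$. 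Plugging this back in,
\begin{equation*}
\int_{[-1,1]^n} f(x)H_{\ur}(x)\,d\mu(x)-f(x^*)\le\Big(\sum_{\alpha:|\alpha|\le d}|f_\alpha|\Big)\frac{C_d\pi^2}{2}\sum_{i=1}^n\frac{1}{(r_i+2)^2}=C_f\sum_{i=1}^n\frac{1}{(r_i+2)^2},
\end{equation*}
which is exactly the claimed bound.

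I do not expect a genuine obstacle here; the only mildly delicate points are (a) making sure the orthogonality computation is valid, i.e. that every $\alpha$ appearing in $f$ satisfies $\alpha_i\le r_i$ so that its coefficient in $H_{\ur}$ is indeed $G_\alpha T_\alpha(x^*)$ rather than $0$ — this is guaranteed by the hypothesis $r_i\ge d\ge|\alpha|\ge\alpha_i$ — and (b) justifying $g^{r_i}_{\alpha_i}\in[0,1]$ so that the telescoping inequality applies; the upper bound $g^r_k\le 1$ is noted in the text, and nonnegativity for $k\le r$ follows from Lemma~\ref{lemdJN} since $1-g^r_k\le \frac{C_d\pi^2}{2(r+2)^2}<1$ for $r$ large (for small $r$ one checks directly, or simply invokes that $\dJr\ge 0$ forces all relevant partial products to behave well; alternatively the weaker inequality $1-G_\alpha\le\sum_i(1-g^{r_i}_{\alpha_i})$ can be replaced by a direct expansion bound). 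Everything else is the routine bookkeeping of expanding a product of Chebyshev series and matching coefficients.
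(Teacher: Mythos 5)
Your proof is correct and follows essentially the same route as the paper: expand $f$ and $H_{\ur}$ in the tensor-Chebyshev basis, use \eqref{eqorthomulti} to compute $\int f H_{\ur}\,d\mu=\sum_{\alpha}f_\alpha T_\alpha(x^*)\prod_i g^{r_i}_{\alpha_i}$, subtract $f(x^*)$, telescope $\prod_i g^{r_i}_{\alpha_i}-1$, and apply Lemma~\ref{lemdJN} termwise. The one place where the paper's version is tidier is the telescoping step: instead of $1-\prod_i a_i\le\sum_i(1-a_i)$ (which needs $a_i\ge 0$ and leads to the nonnegativity digression you flag), the paper writes $\prod_i g^{r_i}_{\alpha_i}-1=\sum_j(g^{r_j}_{\alpha_j}-1)\prod_{k>j}g^{r_k}_{\alpha_k}$ and takes absolute values, using only $|g^{r_k}_{\alpha_k}|\le 1$, thereby avoiding any sign condition on the Jackson coefficients.
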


\begin{proof}
As $f=\sum_{\alpha: |\alpha|\le d}f_\alpha T_\alpha$ and
$H_{\ur}=\prod_{i=1}^n h_{r_i}(x_i)=\prod_{i=1}^n (1+2\sum_{k_i=1}^{r_i} g_{k_i}^{r_i} T_{k_i}(x_i)T_{k_i}(x_i^*))$, we can use the orthogonality relationships
(\ref{eqorthomulti}) among the multivariate
Chebyshev polynomials to derive
$$\int_{[-1,1]^n}f(x)H_{\ur}(x)d\mu(x) = \sum_{\alpha: |\alpha|\le d} f_\alpha T_\alpha(x^*) \prod_{i=1}^n g_{\alpha_i}^{r_i}.$$
Combining this with $f(x^*)=\sum_{\alpha: |\alpha|\le d} f_\alpha T_\alpha(x^*)$ gives:
$$\int_{[-1,1]^n}f(x)H_{\ur}(x)d\mu(x) -f(x^*)= \sum_{\alpha: |\alpha|\le d} f_\alpha T_\alpha(x^*)( \prod_{i=1}^n g_{\alpha_i}^{r_i}-1).$$
Using the identity:  $ \prod_{i=1}^n g_{\alpha_i}^{r_i}-1=\sum_{j=1}^{n} (g^{r_j}_{\alpha_j}-1) \prod_{k=j+1}^n g^{r_k}_{\alpha_k}$ and the fact that $|g^{r_k}_{\alpha_k}|\le 1$,
we get $| \prod_{i=1}^n g_{\alpha_i}^{r_i}-1|\le \sum_{j=1}^n |g^{r_j}_{\alpha_j}-1|$.
Now use  $|T_{\alpha}(x^*)|\le 1$ and the bound from Lemma \ref{lemdJN} for each $|1-g_{\alpha_j}^{r_j}|$ to conclude the proof.
\qed\end{proof}

We can now show our main result, which implies Theorem \ref{theodJNn}.

\begin{theorem}\label{theomultivariate}
Let $f=\sum_{\alpha: |\alpha|\le d} f_\alpha T_\alpha$  be an $n$-variate polynomial of degree $d$. For any integer $r\ge n(d+2)$, we have
$$f^{(r)} -\fmin \le \frac{C_f n^3}{ (r+1)^2},$$
where $C_f=(\sum_{\alpha: |\alpha|\le d} |f_\alpha|) \frac{C_d\pi^2}{ 2}$ and $C_d$ is the constant from Lemma \ref{lemdJN}.
\end{theorem}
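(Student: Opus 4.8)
The plan is to reduce the multivariate bound to the univariate estimate already captured in Lemma \ref{lemtech3}, by choosing the degrees $r_1,\ldots,r_n$ of the univariate Jackson-kernel densities $h_{r_i}$ as equally as possible subject to $\sum_i 2\lceil r_i/2\rceil \le r$, so that the product density $H_{\ur}$ from (\ref{eqHdelta}) is feasible for the program defining $f^{(r)}$. Concretely, first I would set each $r_i$ to be an even integer close to $r/n$; since $r\ge n(d+2)$, this guarantees $r_i\ge d$ (so Lemma \ref{lemtech3} applies) and $r_i\ge 1$. With $r_i$ even we have $2\sum_i\lceil r_i/2\rceil - 2|I| = \sum_i r_i - 2|I| \le r - 2|I|$, so by Lemma \ref{lemHr}(iii) the polynomial $H_{\ur}$ has a Schmüdgen-type representation $\sum_{I\subseteq[n]}\sigma_I\prod_{i\in I}(1-x_i^2)$ with each $\sigma_I$ a sum of squares of degree at most $r-2|I|$; combined with Lemma \ref{lemHr}(i)--(ii), $H_{\ur}$ is a feasible point of (\ref{fr}). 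Hence $f^{(r)} \le \int_{[-1,1]^n} f\,H_{\ur}\,d\mu$, and Lemma \ref{lemtech3} gives $f^{(r)} - \fmin \le C_f\sum_{i=1}^n (r_i+2)^{-2}$.

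The remaining work is purely arithmetic: I would bound $\sum_{i=1}^n (r_i+2)^{-2}$. Each $r_i$ is at least roughly $r/n - O(1)$, so $r_i + 2 \ge (r+1)/n$ for $r$ large enough relative to $n$ (here the hypothesis $r\ge n(d+2)$, together with $d\ge 0$, gives enough slack to absorb the rounding losses from making $r_i$ even). Then $(r_i+2)^{-2} \le n^2/(r+1)^2$, and summing over the $n$ coordinates yields $\sum_{i=1}^n (r_i+2)^{-2} \le n^3/(r+1)^2$. Plugging this into the estimate from Lemma \ref{lemtech3} gives exactly $f^{(r)} - \fmin \le C_f n^3/(r+1)^2$, as claimed. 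Finally, since $f(x^*) = \fmin$ and $H_{\ur}$ is a genuine probability density for $\mu$, there is nothing more to check.

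The only slightly delicate point — and the place where one must be careful with constants rather than the place where any real difficulty lies — is the choice of the $r_i$: one wants them even (to get the clean degree bound $\sum_i r_i - 2|I| \le r - 2|I|$ from Theorem \ref{theoFekete} without an extra $+1$ per coordinate), at least $d$ (to invoke Lemma \ref{lemtech3}), and each at least $(r+1)/n - 2$ in size (to get the final $n^3$ factor). Writing, say, $r_i = 2\lfloor r/(2n)\rfloor$ for all $i$ makes all three requirements transparent once $r\ge n(d+2)$; I would simply verify these three inequalities and then assemble the chain $f^{(r)}\le \int f H_{\ur} d\mu \le \fmin + C_f\sum_i (r_i+2)^{-2} \le \fmin + C_f n^3/(r+1)^2$. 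This also immediately yields Theorem \ref{theodJNn}, since for fixed $f$ (hence fixed $n$, $d$, $C_f$) the right-hand side is $O(1/r^2)$.
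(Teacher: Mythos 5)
Your proposal is correct and takes essentially the same route as the paper: choose degrees $r_1,\ldots,r_n$ close to $r/n$, apply Lemma \ref{lemHr} to verify feasibility of $H_{\ur}$ for the program defining $f^{(r)}$, invoke Lemma \ref{lemtech3}, and bound $\sum_i (r_i+2)^{-2}$ by $n^3/(r+1)^2$. The only difference is bookkeeping: you take all $r_i = 2\lfloor r/(2n)\rfloor$ (equal and even, which makes the degree bound $2\sum_i\lceil r_i/2\rceil = \sum_i r_i \le r$ immediate and gives $r_i+2 \ge (r+1)/n$ outright), whereas the paper writes $r-n = sn + n_0$ and uses a mix of $s$ and $s+1$, then handles even and odd $s$ separately in the degree count.
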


\begin{proof}
Write $r-n=sn+n_0$, where $s,n_0\in\oN$ and $0\le n_0<n$, and define the $n$-tuple $\ur=(r_1,\ldots,r_n)$, setting $r_i=s+1$ for $1\le i\le n_0$ and $r_i=s$ for $n_0+1\le i\le n$, so that $r-n=r_1+\ldots +r_n$. Note that the condition $r\ge n(d+2)$ implies $s\ge d$ and thus $r_i\ge d$ for all $i$.
Moreover, we have: $2\sum_{i=1}^n \lceil r_i/2\rceil = 2n_0\lceil (s+1)/2\rceil+2(n-n_0) \lceil s/2\rceil$, which is equal to $r-n+n_0$ for even $s$ and to $r-n_0$ for odd $s$ and thus always at most $r$. Hence the polynomial $H_{\ur}$ from (\ref{eqHdelta}) has degree at most $r$. By Lemma \ref{lemHr}(ii), (iii), it follows that the polynomial $H_{\ur}$ is feasible for the program defining the parameter $f^{(r)}$.
By Lemma \ref{lemtech3} this implies that
$$f^{(r)} -\fmin \le \int_{[-1,1]^n} f(x)H_{\ur}(x)d\mu(x) - f(x^*)\le C_f \sum_{i=1}^n \frac{1}{ (r_i+2)^2}.$$
Finally, $\sum_{i=1}^n\frac{1}{ (r_i+2)^2} =\frac{n_0}{ (s+3)^2}+\frac{n-n_0}{ (s+2)^2}\le \frac{n}{ (s+2)^2} =\frac{n^3}{ (r+n-n_0)^2}\le \frac{n^3}{ (r+1)^2}$, since $n_0\le n-1$.
\qed\end{proof}

\ignore{
We approximate the Dirac delta function at $x^*$ by taking the product of the univariate density functions from before:
\[
\dJN(x-x^*) = \prod_{j=1}^n \delta^{(r_j)}_J(x_j-x^*_j) = \prod_{j=1}^n \left(\phi_0(x_j) + 2\sum_{k=1}^{r_j-1}g^J_k\phi_k(x_j)T_k(x^*_j)\right),
\]
where the $r_j \ge 1$ are integers and $r = \sum_{j=1}^n r_j$.
Hence we obtain the upper bound:
\begin{equation}
f_{\min} \le f_J^{(r)} :=   \int_{[-1,1]^n} f(x)\dJN(x-x^*)dx.
\label{eq:fNJ}
\end{equation}

As an immediate consequence of the univariate analysis, we have the following.
\begin{lemma}
The polynomial $h(x) := \prod_{j=1}^n \pi\sqrt{1-x_j^2}\, \dJN(x-x^*)$  has degree $r-n$
 and $\dJN(x)$ satisfies the following properties:
\begin{itemize}
\item[(1)] {\bf Nonnegativity:} $\dJN(x-x^*)\ge 0 $ for all $x\in [-1,1]^n.$
\item[(2)] {\bf Normalization:} $\int_{[-1,1]^n} \dJN(x-x^*)dx =1$, which follows from the condition $g^{r}_0=1$ using
 the orthogonality relationships (\ref{eqortho}).
 \item[(3)] {\bf Schm\"udgen representation:} $h$ has a Schm\"udgen representation
 $h(x)=\sum_{I\subseteq [n]} \sigma_I (x) \prod_{i\in I} (1-x_i^2)$ where $\sigma_I$ is SOS of degree at most $r -2|I|-1$.
 \end{itemize}
\end{lemma}

We may now show our main result.

\begin{theorem}\label{theodJNnew}
Let $f \in \oR[x]$ be a polynomial of degree $d$. Then,
$$f^{(r-n)} - f_{\min} \le f^{(r)}_J-f_{\min} \le \sum_{j=1}^n \frac{C_d\pi^2}{2(r_j+1)^2} \sum_{\substack{\alpha\in\oN^n\\ 0 \leq |\alpha| \leq d}} |f_\alpha T_{\alpha}(x^*)|.$$
\end{theorem}

\begin{proof}
Using the orthogonality relationships (\ref{eqortho}), it follows that
$$f^{(r)}_J =\int_{[-1,1]^n} f(x)\dJN(x-x^*)dx =
\sum_{\substack{\alpha\in\oN^n\\ 0 \leq |\alpha| \leq d}} f_\alpha \prod_{j=1}^n g_{\alpha_j}^{r_j}T_{\alpha_j}(x^*_j),
$$
and thus, using $f_{\min} = f(x^*) = \sum_{\substack{\alpha\in\oN^n\\ 0 \leq |\alpha| \leq d}}
f_\alpha T_{\alpha}(x^*)$,
\[
f^{(r)}_J-f_{\min} = \sum_{\substack{\alpha\in\oN^n\\ 0 \leq |\alpha| \leq d}}
f_\alpha T_{\alpha}(x^*) \Big(\prod_{j=1}^n g_{\alpha_j}^{r_j}-1\Big).
\]
Recall that $|g_{\alpha_j}^{r_j}| \leq 1$ for all $0 \leq \alpha_j \leq d$.
Hence, by remarking that $0\leq\alpha_j\leq d$ for $0\leq j\leq n$ and using again Lemma \ref{lemdJN},
we have
\[
\left| \prod_{j=1}^n g_{\alpha_j}^r-1\right| = \left|\sum_{j=1}^n (g_{\alpha_j}^{r_j} -1)g_{\alpha_{j+1}}^{r_{j+1}}\cdots g_{\alpha_n}^{r_n} \right|
                              \leq  \left|\sum_{j=1}^n (g_{\alpha_j}^{r_j} -1) \right|\\
                              \leq  \sum_{j=1}^n \frac{C_d\pi^2}{2(r_j+1)^2}.
\]
We obtain:
$$
f^{(r)}_J -f_{\min} \le \sum_{j=1}^n \frac{C_d\pi^2}{2(r_j+1)^2} \sum_{\substack{\alpha\in\oN^n\\ 0 \leq |\alpha| \leq d}} |f_\alpha T_{\alpha}(x^*)|.
$$
Finally the fact that $f^{(r-n)}  \le f^{(r)}_J$ follows from the fact that the density $h$ has degree $r-n$.
\qed\end{proof}

If $r$ is an integer multiple of $n$, then we may set $r_1 = \ldots = r_n = r/n$ to get the
error bound
\[
f^{(r-n)} - f_{\min} \le  \frac{C_d n^3\pi^2}{2(r+n)^2} \sum_{\substack{\alpha\in\oN^n\\ 0 \leq |\alpha| \leq d}} |f_\alpha T_{\alpha}(x^*)|.
\]
}

\section{Computing the parameter $f^{(r)}$ as a generalized eigenvalue problem}\label{seceigenvalue}

As the parameter $f^{(r)}$ is defined in terms of sum-of-squares polynomials (cf. Definition \ref{defnewbound}), it can be computed by means of a semidefinite program.
As we now observe, as the program (\ref{fr}) has only one affine constraint,  $f^{(r)}$ can in fact be computed in a cheaper way as a generalized eigenvalue problem.

 Using the inner product from (\ref{eqinner}), the parameter $f^{(r)}$   can be rewritten as
\begin{equation}\label{eqbound1}
\begin{array}{ll}
\displaystyle
f^{(r)}=\min_{h\in \oR[x]} \langle f,h\rangle \ \text{ such that } & \langle h,T_0\rangle =1, \  h(x)=\sum_{I\subseteq [n]} \sigma_I (x) \prod_{i\in I} (1-x_i^2),\\
& \sigma_I\in \Sigma[x],\  \deg(\sigma_I)\le  r-2|I| \ \; \forall  I\subseteq [n].
\end{array}
\end{equation}

For convenience we use below the following notation. For a set $I\subseteq [n]$ and an integer $r\in \oN$ we let $\Lambda^I_r$ denote the set of sequences $\beta\in\oN^n$ with $|\beta|\le \lfloor \frac{r-2|I|}{ 2}\rfloor$.
As is well known one can express the condition that $\sigma_I$ is a sum-of-squares polynomial, i.e., of the form $\sum_k p_k(x)^2$ for some $p_k\in \oR[x]$,  as a semidefinite program. More precisely, using the Chebyshev basis  to express the polynomials $p_k$,
we obtain that $\sigma_I$ is a sum-of-squares polynomial if and only if there exists  a matrix variable $M^I$ indexed by $\Lambda^I_r$,
which is positive semidefinite and satisfies
\begin{equation}\label{eqsigmaI}
\sigma_I= \sum_{\beta,\gamma\in \Lambda^I_r} M^I_{\beta,\gamma} T_\beta T_\gamma.
\end{equation}
For each $I\subseteq [n]$, we introduce the following matrices $A^I$ and $B^I$, which are also  indexed by the set $\Lambda^I_r$ and, for $\beta,\gamma\in \Lambda^I_r$,  with entries
\begin{equation}\label{eqAB}
\begin{array}{l}
\displaystyle A^I_{\beta,\gamma} = \langle f, T_\beta T_\gamma \prod_{i\in I} (1-x_i^2)\rangle,\\
\displaystyle B^I_{\beta,\gamma} = \langle T_0,T_\beta T_\gamma \prod_{i\in I} (1-x_i^2)\rangle.
\end{array}
\end{equation}
We will indicate in the appendix how to compute the matrices $A^I$ and $B^I$.

We can now reformulate the parameter $f^{(r)}$ as follows.

\begin{lemma}
Let $A^I$ and $B^I$ be the matrices defined as in (\ref{eqAB}) for each $I\subseteq [n]$. Then
the parameter $f^{(r)}$ can be reformulated  using the following semidefinite program in the matrix variables $M^I$ ($I\subseteq [n]$):
\begin{equation}\label{eqbound2}
f^{(r)}=\min_{M^I: I\subseteq [n]} \ \sum_{I\subseteq [n]} \tr(A^IM^I) \ \text{ such that }\ M^I\succeq 0 \ \forall I\subseteq [n],\ \sum_{I\subseteq [n]} \tr(B^IM^I) =1.
\end{equation}
\end{lemma}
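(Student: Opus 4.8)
The plan is to show that the semidefinite program (\ref{eqbound2}) is exactly a reformulation of the program (\ref{eqbound1}) defining $f^{(r)}$, by establishing a bijection (or at least a value-preserving correspondence) between feasible solutions of the two programs. The key observation is that the only affine constraint in (\ref{eqbound1}) is the normalization $\langle h, T_0\rangle = 1$, and the objective $\langle f, h\rangle$ is linear in $h$; since $h$ is determined by the tuple $(\sigma_I)_{I\subseteq [n]}$ via $h = \sum_{I\subseteq [n]} \sigma_I \prod_{i\in I}(1-x_i^2)$, both the objective and the constraint are linear functionals of the $\sigma_I$'s, hence (once each $\sigma_I$ is encoded by a Gram matrix $M^I\succeq 0$) linear functionals of the $M^I$'s.

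First I would recall the standard Gram-matrix (sum-of-squares) representation: a polynomial $\sigma_I$ is a sum of squares of degree at most $r-2|I|$ if and only if there is a positive semidefinite matrix $M^I$ indexed by $\Lambda^I_r$ with $\sigma_I = \sum_{\beta,\gamma\in\Lambda^I_r} M^I_{\beta,\gamma} T_\beta T_\gamma$, which is exactly (\ref{eqsigmaI}); this uses that $\{T_\beta : |\beta|\le \lfloor (r-2|I|)/2\rfloor\}$ is a basis of the space of $n$-variate polynomials of degree at most $\lfloor (r-2|I|)/2\rfloor$, as noted after (\ref{eqorthomulti}). Next, for a feasible $(M^I)_{I\subseteq[n]}$ of (\ref{eqbound2}), define $\sigma_I$ by (\ref{eqsigmaI}) and $h = \sum_I \sigma_I \prod_{i\in I}(1-x_i^2)$. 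Then by linearity of the inner product,
\[
\langle f, h\rangle = \sum_{I\subseteq [n]} \Big\langle f, \sigma_I \prod_{i\in I}(1-x_i^2)\Big\rangle = \sum_{I\subseteq [n]} \sum_{\beta,\gamma\in\Lambda^I_r} M^I_{\beta,\gamma}\, \Big\langle f, T_\beta T_\gamma \prod_{i\in I}(1-x_i^2)\Big\rangle = \sum_{I\subseteq [n]} \tr(A^I M^I),
\]
using the definition of $A^I$ in (\ref{eqAB}) and the symmetry $\tr(A^I M^I) = \sum_{\beta,\gamma} A^I_{\beta,\gamma} M^I_{\beta,\gamma}$. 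Likewise $\langle h, T_0\rangle = \sum_{I\subseteq [n]} \tr(B^I M^I)$, so the normalization constraint $\langle h,T_0\rangle = 1$ becomes $\sum_I \tr(B^I M^I) = 1$. Hence every feasible point of (\ref{eqbound2}) yields a feasible $h$ for (\ref{eqbound1}) with the same objective value, giving $f^{(r)} \le$ the optimum of (\ref{eqbound2}).

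Conversely, given a feasible $h$ for (\ref{eqbound1}) with its Schm\"udgen-type decomposition $h = \sum_I \sigma_I \prod_{i\in I}(1-x_i^2)$, each $\sigma_I\in\Sigma[x]$ with $\deg\sigma_I \le r-2|I|$ admits a Gram matrix $M^I\succeq 0$ indexed by $\Lambda^I_r$ satisfying (\ref{eqsigmaI}); the same computation shows $(M^I)_I$ is feasible for (\ref{eqbound2}) with equal objective, giving the reverse inequality. Combining the two directions proves the claimed identity. I do not expect a genuine obstacle here; the only points requiring a little care are (a) that the Gram representation is stated over the \emph{Chebyshev} basis rather than the monomial basis — which is fine since $\{T_\beta\}$ is also a basis, and products $T_\beta T_\gamma$ expand back into the $T_\alpha$ basis so the inner products in (\ref{eqAB}) are well defined — and (b) that the decomposition of a given feasible $h$ need not be unique, but this is immaterial since we only need \emph{some} decomposition for the reverse direction. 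One should also note in passing that attainment of the minimum (already guaranteed for $f^{(r)}$ by the remark after Definition \ref{defnewbound}) transfers to (\ref{eqbound2}), so writing $\min$ rather than $\inf$ is justified.
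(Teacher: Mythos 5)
Your proof is correct and follows essentially the same route as the paper: express $h$ via the Gram matrices $M^I$ using (\ref{eqsigmaI}), then observe that both $\langle f,h\rangle$ and $\langle T_0,h\rangle$ become the trace expressions $\sum_I \tr(A^I M^I)$ and $\sum_I \tr(B^I M^I)$. The paper is terser and treats the correspondence as an evident substitution, whereas you spell out both directions explicitly; the additional remarks about the Chebyshev basis and non-uniqueness of the decomposition are correct but not needed beyond what the paper already assumes.
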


\begin{proof}
Using  relation (\ref{eqsigmaI}) we can express the polynomial variable $h$ in (\ref{eqbound1}) in terms of the matrix variables $M^I$ and obtain
$$h= \sum_{I\subseteq [n]}  \sum_{\beta,\gamma\in \Lambda^I_r}M^I_{\beta,\gamma} T_\beta T_\gamma \prod_{i\in I}(1-x_i)^2.$$
First this permits us to reformulate the objective function $\langle f,h\rangle$ in terms of the matrix variables $M^I$ in the following way:
$$\begin{array}{ll}
\langle f,h\rangle
& =\sum_I\sum_{\beta,\gamma} M^I_{\beta,\gamma} \langle f,T_\beta T_\gamma\prod_{i\in I}(1-x_i^2)\rangle\\
& = \sum_I\sum_{\beta,\gamma} M^I_{\beta,\gamma} A^I_{\beta,\gamma}\\
& =\sum_I\tr(A^IM^I).
\end{array}
$$
Second we can reformulate the constraint $\langle T_0,h\rangle =1$ using
$$\begin{array}{ll}
\langle T_0,h\rangle
& = \sum_I \sum_{\beta,\gamma} M^I_{\beta,\gamma} \langle T_0, T_\beta T_\gamma \prod_{i\in I}(1-x_i^2)\rangle \\
&= \sum_I\sum_{\beta,\gamma} M^I_{\beta,\gamma} B^I_{\beta,\gamma} \\
&= \sum_I \tr(B^IM^I).
\end{array}
$$
From this follows that the program (\ref{eqbound1}) is indeed equivalent to the program (\ref{eqbound2}).
\qed\end{proof}

The program (\ref{eqbound2})  is a semidefinite program  with only one constraint. Hence, as we show next,  it is  equivalent to a generalized eigenvalue problem.

\begin{theorem}
\label{th:gen eig formulation}
For $I \subseteq [n]$ let $A^I$ and $B^I$ be the matrices from (\ref{eqAB}) and define the parameter
\[
\lambda^{(I)} = \max \left\{\lambda \; |\; A^I - \lambda B^{I} \succeq 0\right\}= \min \left\{\lambda \; |\; A^Ix = \lambda B^{I}x \mbox{ for some non-zero vector $x$}\right\}.
\]
One then has
$f^{(r)} = \min_{I \subseteq [n]} \lambda^{(I)}$.
\end{theorem}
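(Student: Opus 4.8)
The statement to prove is the generalized‑eigenvalue reformulation
$f^{(r)} = \min_{I\subseteq[n]}\lambda^{(I)}$, where $\lambda^{(I)}$ is the largest $\lambda$ with $A^I-\lambda B^I\succeq 0$. The strategy is to start from the semidefinite program (\ref{eqbound2}) and exploit the fact that it has a single linear constraint, $\sum_I \tr(B^I M^I)=1$, together with block‑diagonal structure (one PSD block $M^I$ per $I\subseteq[n]$). First I would record two facts that make the argument clean: (a) each $B^I$ is positive definite, so $\tr(B^I M^I)>0$ whenever $M^I\succeq 0$ is nonzero, and in particular $\lambda^{(I)}$ is well defined and the two descriptions of $\lambda^{(I)}$ (max‑$\lambda$ with $A^I-\lambda B^I\succeq0$, and min generalized eigenvalue of the pencil $(A^I,B^I)$) agree; (b) for each fixed $I$, the reduced single‑block problem $\min\{\tr(A^I M^I): M^I\succeq 0,\ \tr(B^I M^I)=1\}$ has optimal value exactly $\lambda^{(I)}$.

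For (a), positive definiteness of $B^I$: by (\ref{eqAB}), $\langle T_0, T_\beta T_\gamma \prod_{i\in I}(1-x_i^2)\rangle = \int_{[-1,1]^n} T_\beta T_\gamma \prod_{i\in I}(1-x_i^2)\,d\mu$, so for a vector $x=(x_\beta)_{\beta\in\Lambda^I_r}$ we get $x^\T B^I x = \int p^2 \prod_{i\in I}(1-x_i^2)\,d\mu$ where $p=\sum_\beta x_\beta T_\beta$. Since $\prod_{i\in I}(1-x_i^2)>0$ on the interior of $[-1,1]^n$ and the $T_\beta$ ($\beta\in\Lambda^I_r$) are linearly independent, this is $0$ only if $x=0$; hence $B^I\succ0$. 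Then a standard congruence argument (write $B^I = L L^\T$, substitute $N = L^\T M^I L$, so that $A^I-\lambda B^I\succeq 0 \iff L^{-1}A^I L^{-\T}\succeq \lambda I$) gives that the largest such $\lambda$ is the smallest eigenvalue of $L^{-1}A^I L^{-\T}$, i.e. the smallest generalized eigenvalue of the pencil, proving the two formulas for $\lambda^{(I)}$ coincide.

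For (b) and the assembly: given a feasible $(M^I)_{I}$ for (\ref{eqbound2}), set $t_I = \tr(B^I M^I)\ge 0$, so $\sum_I t_I = 1$. For each $I$ with $t_I>0$, the matrix $M^I/t_I$ is feasible for the reduced problem for that $I$, and by definition of $\lambda^{(I)}$ we have $\tr(A^I (M^I/t_I)) \ge \lambda^{(I)}$ — this is exactly the SDP‑duality / definition step: $A^I \succeq \lambda^{(I)} B^I$ implies $\tr(A^I M^I) \ge \lambda^{(I)}\tr(B^I M^I)$ for $M^I\succeq 0$. Summing, $\sum_I \tr(A^I M^I) \ge \sum_I \lambda^{(I)} t_I \ge (\min_I \lambda^{(I)})\sum_I t_I = \min_I \lambda^{(I)}$, so $f^{(r)} \ge \min_I\lambda^{(I)}$. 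Conversely, pick $I^*$ attaining $\min_I\lambda^{(I)}$ and let $v$ be a generalized eigenvector, $A^{I^*}v = \lambda^{(I^*)} B^{I^*}v$, normalized so that $v^\T B^{I^*} v = 1$ (possible since $B^{I^*}\succ0$ makes $v^\T B^{I^*}v>0$); set $M^{I^*} = vv^\T$ and $M^I=0$ for $I\ne I^*$. This is feasible for (\ref{eqbound2}) with objective value $\tr(A^{I^*}vv^\T) = v^\T A^{I^*} v = \lambda^{(I^*)} v^\T B^{I^*} v = \lambda^{(I^*)} = \min_I\lambda^{(I)}$, giving $f^{(r)}\le\min_I\lambda^{(I)}$. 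Combining the two inequalities finishes the proof.

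**Main obstacle.** None of the steps is deep; the one point needing genuine care is establishing $B^I\succ 0$ (so that normalization $v^\T B^{I^*}v=1$ is legitimate and the pencil is regular), which rests on $\prod_{i\in I}(1-x_i^2)$ being a nonzero density times a positive weight on the interior and on linear independence of the $T_\beta$. A secondary subtlety is handling the blocks $I$ with $t_I=0$ in the forward direction: there $M^I\succeq0$ with $\tr(B^IM^I)=0$ forces $M^I=0$ (again by $B^I\succ0$), so such blocks contribute $0$ to the objective and can be dropped harmlessly. Everything else is bookkeeping with traces and the convexity/linearity of the single constraint.
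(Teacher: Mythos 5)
Your proof is correct, and it takes a genuinely different route from the paper's. The paper proves the identity $f^{(r)} = \min_I \lambda^{(I)}$ by invoking SDP strong duality: it observes that the dual of the SDP (\ref{eqbound2}) is $\sup\{\lambda : A^I - \lambda B^I \succeq 0 \; \forall I\subseteq[n]\}$, then verifies strict primal feasibility (via $\tr(B^I)>0$, using multiples of the identity) and dual feasibility (via $\lambda = f_{\min}$, using nonnegativity of $f-f_{\min}$ on the box), and concludes zero duality gap with dual attainment. You instead argue directly, bypassing the strong duality theorem: you establish $B^I\succ 0$ (strictly, not just PSD and nonzero as the paper does), use a Cholesky congruence to show the two characterizations of $\lambda^{(I)}$ agree and that the pencil is regular with a real spectrum, then prove both inequalities by hand — the lower bound $f^{(r)}\ge\min_I\lambda^{(I)}$ from $\tr((A^I-\lambda^{(I)}B^I)M^I)\ge 0$ and convex-combination bookkeeping over the $t_I$, and the upper bound by constructing a rank-one feasible solution $M^{I^*}=vv^\T$ from a normalized generalized eigenvector. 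The tradeoff: the paper's route is shorter but relies on the SDP duality machinery as a black box and never actually justifies the equality of the two descriptions of $\lambda^{(I)}$ in the theorem statement (it only uses the first); your route is more self-contained and elementary, at the cost of proving the slightly stronger fact $B^I\succ 0$, which you correctly derive from the positivity of $\prod_{i\in I}(1-x_i^2)$ and of the density of $\mu$ on the interior together with linear independence of the $T_\beta$. Both are valid; yours in fact fills in a small gap the paper leaves implicit.
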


\proof
The dual semidefinite program of the program  (\ref{eqbound2}) is given by
\begin{equation}\label{eqdual}
\sup \left\{  \lambda \; |\; A^{I} - \lambda B^{I} \succeq 0 \;\;\; \forall I \subseteq [n] \right\}.
\end{equation}
We first show that the primal problem (\ref{eqbound2}) is strictly feasible. To see this it suffices to show that   $\tr(B^I) > 0$, since then  one may
 set $M_I$ equal to a suitable multiple of the identity matrix and thus one gets a strictly feasible solution to (\ref{eqbound2}).
Indeed, the matrix $B^I$ is positive semidefinite since, for any  scalars $g_\beta$,
$$\sum_{\beta,\gamma}g_\beta g_\gamma B^I_{\beta \gamma} =
\int_{[-1,1]^n} \Bigl(\sum_\beta g_\beta T_\beta\Bigr)^2 \prod_{i\in I}(1-x_i^2) d\mu(x)\ge 0.$$
Thus $\tr(B^I)\ge 0$ and, moreover, $\tr(B^I)>0$ since
$B^I$ is nonzero.

Moreover, the dual problem (\ref{eqdual}) is also feasible, since $\lambda=f_{\min}$ is a feasible solution.
This follows from the fact that the polynomial $f-f_{\min}$ is nonnegative over $[-1,1]^n$, which implies that the matrix $A^I-f_{\min} B^I$ is positive semidefinite. Indeed, using the same argument as above for showing that $B^I\succeq 0$, we have
$$\sum_{\beta,\gamma}g_\beta g_\gamma (A^I-f_{\min} B^I)_{\beta,\gamma}=\int_{[-1,1]^n} (f(x)-f_{\min})\Bigl(\sum_\beta g_\beta T_\beta\Bigr)^2 \prod_{i\in I}(1-x_i^2) d\mu(x) \ge 0.$$
Since the primal problem is strictly feasible and the dual problem is feasible, there is no duality gap and the dual problem attains its supremum. The result follows.
\qed

\section{Numerical examples}\label{secnumerical}

We examine the  polynomial test functions which were also used in \cite{ConvAna} and \cite{ElemCalc}, and are described in the appendix to this paper.

The numerical examples given here only serve to illustrate the observed convergence behavior of the sequence $f^{(r)}$ as compared to the theoretical convergence rate. In particular, the computational demands for computing $f^{(r)}$ for large $r$ are such that it cannot compete in practice with the known iterative methods referenced in the introduction.

For the polynomial test functions we list in (Table \ref{tab:fN results}) the values of $f^{(r)}$ for even $r$ up to $r=48$, obtained by solving the generalized eigenvalue problem in Theorem \ref{th:gen eig formulation}
using the {\tt eig} function of Matlab. Recall that for step
$r$ of the hierarchy the polynomial density function $h$ is of Schm\"udgen-type and  has degree $r$.

For the examples listed the computational time is negligible, and therefore not listed;
recall that the computation of $f^{(r)}$ for even $n$ requires the solution of $2^n$ generalized eigenvalue problems indexed by subsets $I \subset [n]$,
where the order of the matrices equals $\binom{n+{\lfloor r/2 -|I| \rfloor} }{ n}$;
cf.\ Theorem \ref{th:gen eig formulation}.

\begin{table}[h!]
\caption{The upper bounds $f^{(r)}$ for the test functions.\label{tab:fN results}}
\begin{center}
{\small
\begin{tabular}{|c|c|c|c|c|c|c|c|c|}
	\hline	
	\multirow{2}{*}{$r$} & \multirow{2}{*}{Booth} & \multirow{2}{*}{Matyas} & \multirow{2}{*}{Motzkin} & \multirow{2}{*}{Three-Hump} & \multicolumn{2}{c|}{Styblinski-Tang} & \multicolumn{2}{c|}{Rosenbrock}\\
	& & & & & $n=2$ & $n=3$ & $n=2$ & $n=3$ \\
	\hline
    6    &    145.3633  &   4.1844  &   1.1002 &   24.6561 &  -27.4061  &              &  157.7604     &                \\
    8    &    118.0554  &   3.9308  &   0.8764 &   15.5022 &  -34.5465  &   -40.1625   &   96.8502     & 318.0367                \\
   10    &     91.6631  &   3.8589  &   0.8306 &    9.9919 &  -40.0362  &   -47.6759   &   68.4239     & 245.9925       \\
   12    &     71.1906  &   3.8076  &   0.8098 &    6.5364 &  -47.4208  &   -55.4061   &   51.7554     & 187.2490       \\
   14    &     57.3843  &   3.0414  &   0.7309 &    4.5538 &  -51.2011  &   -64.0426   &   39.0613     & 142.8774       \\
   16    &     47.6354  &   2.4828  &   0.6949 &    3.3453 &  -56.0904  &   -70.2894   &   30.3855     & 111.0703       \\
   18    &     40.3097  &   2.0637  &   0.5706 &    2.5814 &  -58.8010  &   -76.0311   &   24.0043     &  88.3594        \\
   20    &     34.5306  &   1.7417  &   0.5221 &    2.0755 &  -61.8751  &   -80.5870   &   19.5646     &  71.5983        \\
   22    &     28.9754  &   1.4891  &   0.4825 &    1.7242 &  -63.9161  &   -85.4149   &   16.2071     &  59.0816        \\
   24    &     24.6380  &   1.2874  &   0.4081 &    1.4716 &  -65.5717  &   -88.5665   &   13.6595     &  49.5002        \\
   26    &     21.3151  &   1.1239  &   0.3830 &    1.2830 &  -67.2790  &              &   11.6835     &        \\
   28    &     18.7250  &   0.9896  &   0.3457 &    1.1375 &  -68.2078  &              &   10.1194     &                 \\
   30    &     16.6595  &   0.8779  &   0.3016 &    1.0216 &  -69.5141  &              &    8.8667     &                 \\
   32    &     14.9582  &   0.7840  &   0.2866 &    0.9263 &  -70.3399  &              &    7.8468     &                 \\
   34    &     13.5114  &   0.7044  &   0.2590 &    0.8456 &  -71.0821  &              &    7.0070     &                  \\
   36    &     12.2479  &   0.6363  &   0.2306 &    0.7752 &  -71.8284  &              &    6.3083     &                  \\
   38    &     11.0441  &   0.5776  &   0.2215 &    0.7129 &  -72.2581  &              &    5.7198     &                  \\
   40    &     10.0214  &   0.5266  &   0.2005 &    0.6571 &  -72.8953  &              &    5.2215     &                  \\
   42    &      9.1504  &   0.4821  &   0.1815 &    0.6070 &  -73.3011  &              &    4.7941     &                  \\
   44    &      8.4017  &   0.4430  &   0.1754 &    0.5622 &  -73.6811  &              &    4.4266     &                  \\
   46    &      7.7490  &   0.4084  &   0.1597 &    0.5220 &  -74.0761  &              &    4.1070     &                  \\
   48    &      7.1710  &   0.3778  &   0.1462 &    0.4860 &  -74.3070  &              &    3.8283     &                  \\
 	\hline
\end{tabular}
}\end{center}
\end{table}

We note that the observed rate of convergence seems in line with the $O(1/r^2)$ error bound.

As a second numerical experiment, we compare (see Table \ref{table:result1}) the upper bound $f^{(r)}$ to the upper bound
$\underline{f}_{\mathbf{K}}^{(r)}$ defined in (\ref{fundr}). Recall that the bound $\underline{f}_{\mathbf{K}}^{(r)}$
corresponds to using sum-of-squares density functions of degree at most $r$ and the Lebesgue measure.
As shown in \cite{ConvAna}, the computation of $\underline{f}_{\mathbf{K}}^{(r)}$ may be done by solving a single
generalized eigenvalue problem with matrices of order $\binom{n+{\lfloor r/2 -|I| \rfloor} }{ n}$.
Thus the computation of  $\underline{f}_{\mathbf{K}}^{(r)}$ is significantly cheaper than that of $f^{(r)}$.

\begin{table}[h!]
\captionsetup{width=13cm}
\caption{Comparison of the upper bounds $f^{(r)}$ and $\underline{f}_{\mathbf{K}}^{(r)}$ for Booth, Matyas, Three--Hump Camel, and Motzkin functions.
\label{table:result1}}
\begin{center}
{\small
	\begin{tabular}{| c | >{\centering}m{1.2cm} | c | >{\centering}m{1cm} | c | >{\centering}m{1.3cm} | c | >{\centering}m{1.2cm} | c |}
    \hline
\multirow{2}{*}{$r$} & \multicolumn{2}{c|}{Booth function} & \multicolumn{2}{c|}{Matyas function} & \multicolumn{2}{m{3cm}|}{\centering Three--Hump Camel function}& \multicolumn{2}{c|}{Motzkin polynomial} \\ \cline{2-9}
                   & $\underline{f}_{\mathbf{K}}^{(r)}$  & $f^{(r)}$& $\underline{f}_{\mathbf{K}}^{(r)}$  & $f^{(r)}$  & $\underline{f}_{\mathbf{K}}^{(r)}$  & $f^{(r)}$  & $\underline{f}_{\mathbf{K}}^{(r)}$  & $f^{(r)}$\\ \hline
$6$  & 118.383 &   145.3633 & 4.2817 & 4.1844 & 29.0005 & 24.6561    & 1.0614 & 1.1002\\ \hline
$8$  & 97.6473 &   118.0554 & 3.8942 & 3.9308 & 9.5806 & 15.5022     & 0.8294 & 0.8764\\ \hline
$10$ & 69.8174 &    91.6631 & 3.6894 & 3.8589 & 9.5806 &  9.9919     & 0.8010 & 0.8306\\ \hline
$12$ & 63.5454 &    71.1906 & 2.9956 & 3.8076 & 4.4398 &  6.5364     & 0.8010 & 0.8098\\ \hline
$14$ & 47.0467 &    57.3843 & 2.5469 & 3.0414 & 4.4398 &  4.5538     & 0.7088 & 0.7309\\ \hline
$16$ & 41.6727 &    47.6354 & 2.0430 & 2.4828 & 2.5503 &  3.3453     & 0.5655 & 0.6949\\ \hline
$18$ & 34.2140 &    40.3097 & 1.8335 & 2.0637 & 2.5503 &  2.5814     & 0.5655 & 0.5706\\ \hline
$20$ & 28.7248 &    34.5306 & 1.4784 & 1.7417 & 1.7127 &  2.0755     & 0.5078 & 0.5221\\ \hline
$22$ & 25.6050 &    28.9754 & 1.3764 & 1.4891 & 1.7127 &  1.7242     & 0.4060 & 0.4825\\ \hline
$24$ & 21.1869 &    24.6380 & 1.1178 & 1.2874 & 1.2775  &  1.4716    & 0.4060 & 0.4081\\ \hline
$26$ & 19.5588 &    21.3151 & 1.0686 & 1.1239 & 1.2775  &  1.2830    & 0.3759   & 0.3830\\ \hline
$28$ & 16.5854 &    18.7250 & 0.8742 & 0.9896 & 1.0185  &  1.1375    & 0.3004   & 0.3457\\ \hline
$30$ & 15.2815 &    16.6595 & 0.8524 & 0.8779 & 1.0185  &  1.0216    & 0.3004   & 0.3016\\ \hline
$32$ & 13.4626 &    14.9582 & 0.7020 & 0.7840 & 0.8434  &  0.9263    & 0.2819   & 0.2866\\ \hline
$34$ & 12.2075 &    13.5114 & 0.6952 & 0.7044 & 0.8434  &  0.8456    & 0.2300   & 0.2590\\ \hline
$36$ & 11.0959 &    12.2479 & 0.5760 & 0.6363 & 0.7113  &  0.7752    & 0.2300   & 0.2306\\ \hline
$38$ & 9.9938  &    11.0441 & 0.5760 & 0.5776 & 0.7113  &  0.7129    & 0.2185   & 0.2215\\ \hline
$40$ & 9.2373  &    10.0214 & 0.4815 & 0.5266 & 0.6064  &  0.6571    & 0.1817   & 0.2005\\ \hline
  \end{tabular}
  }\end{center}
  \end{table}
					
 It is interesting to note that, in almost all cases, $f^{(r)} > \underline{f}_{\mathbf{K}}^{(r)}$.
 Thus even though the measure $d\mu(x)$ and the Schm\"udgen-type densities are useful in getting improved error bounds,
 they mostly do not lead to improved upper bounds for these examples.
 This also suggests that it might be possible to improve the error result $\underline{f}_{\mathbf{K}}^{(r)} - f_{\min} = O(1/\sqrt{r})$  in \cite{ConvAna},
 at least for the case ${\mathbf{K}} = [-1,1]^n$. To illustrate this effect we graphically represented the results of Table \ref{table:result1} in Figure \ref{fig:result1}. Note that the bound $\frac{C_fn^3}{(r+1)^2}$ of Theorem \ref{theomultivariate} would lie far above these graphs. To give an idea for the value of the constants $C_f$ we calculated them for the Booth, Matyas, Three-Hump Camel,
 and Motzkin functions: $C_{\text{Booth}} \approx  2.6\cdot 10^5,\ C_{\text{Matyas}} \approx 9.9\cdot 10^3,\ C_{\text{ThreeHump}} \approx 3.5\cdot 10^7$ and $C_{\text{Motzkin}} \approx 1.1\cdot 10^5$.

\begin{figure}[h!]
\begin{center}
\begin{subfigure}{0.4\textwidth}
	\includegraphics[width=\textwidth]{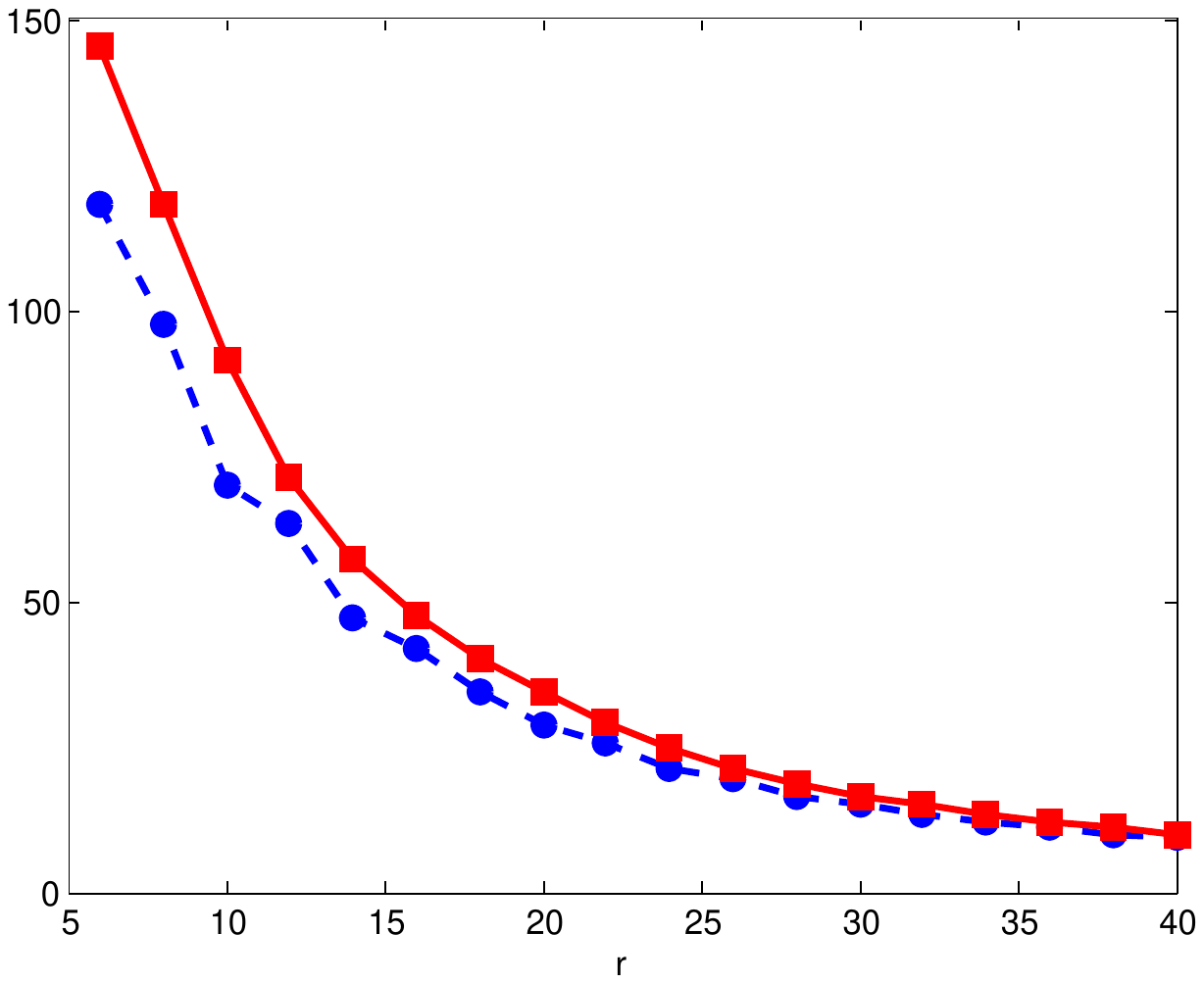}
	\caption{Booth function}
\end{subfigure}
\begin{subfigure}{0.4\textwidth}
	\includegraphics[width=\textwidth]{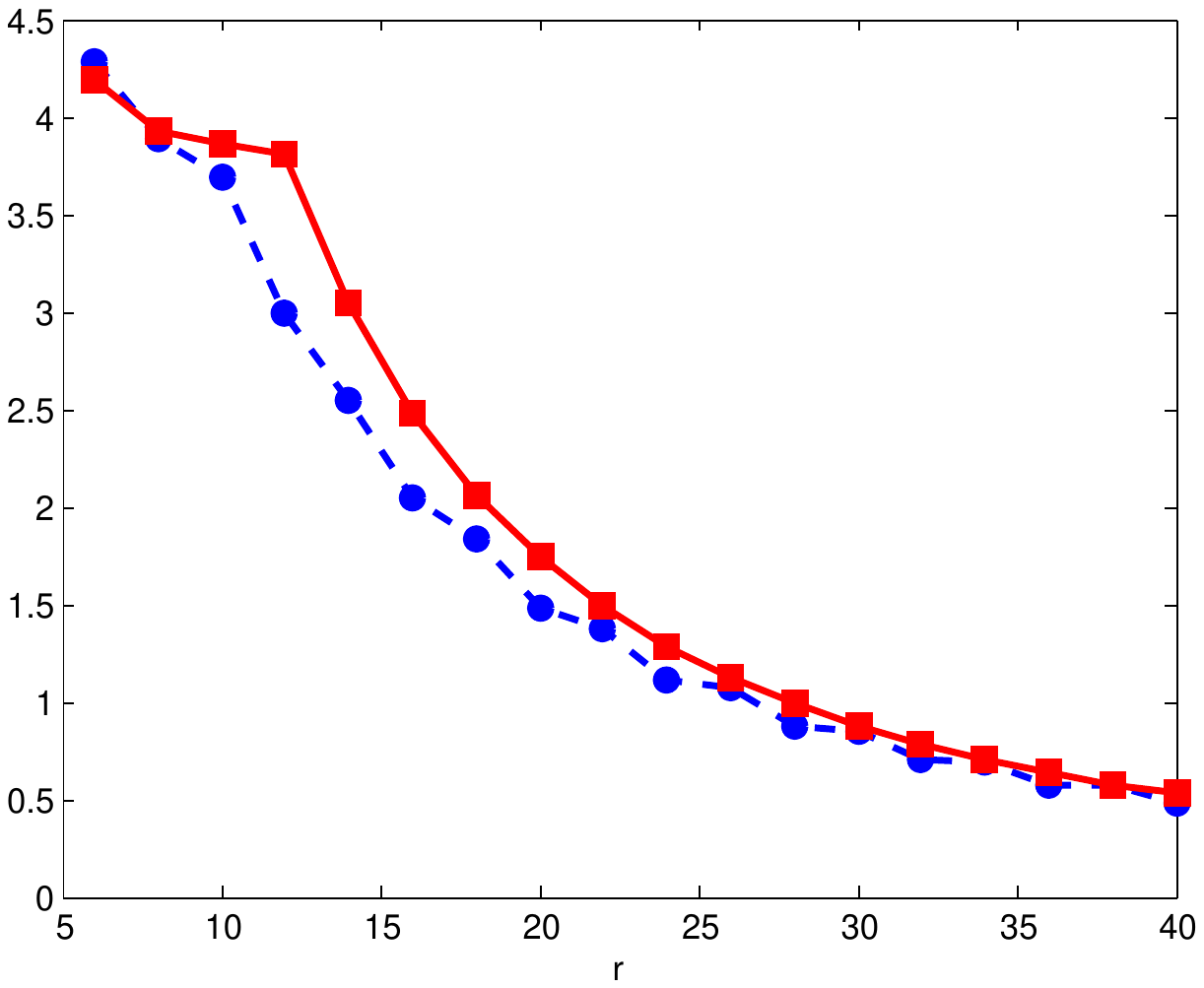}
	\caption{Matyas function}
\end{subfigure}
\begin{subfigure}{0.4\textwidth}
	\includegraphics[width=\textwidth]{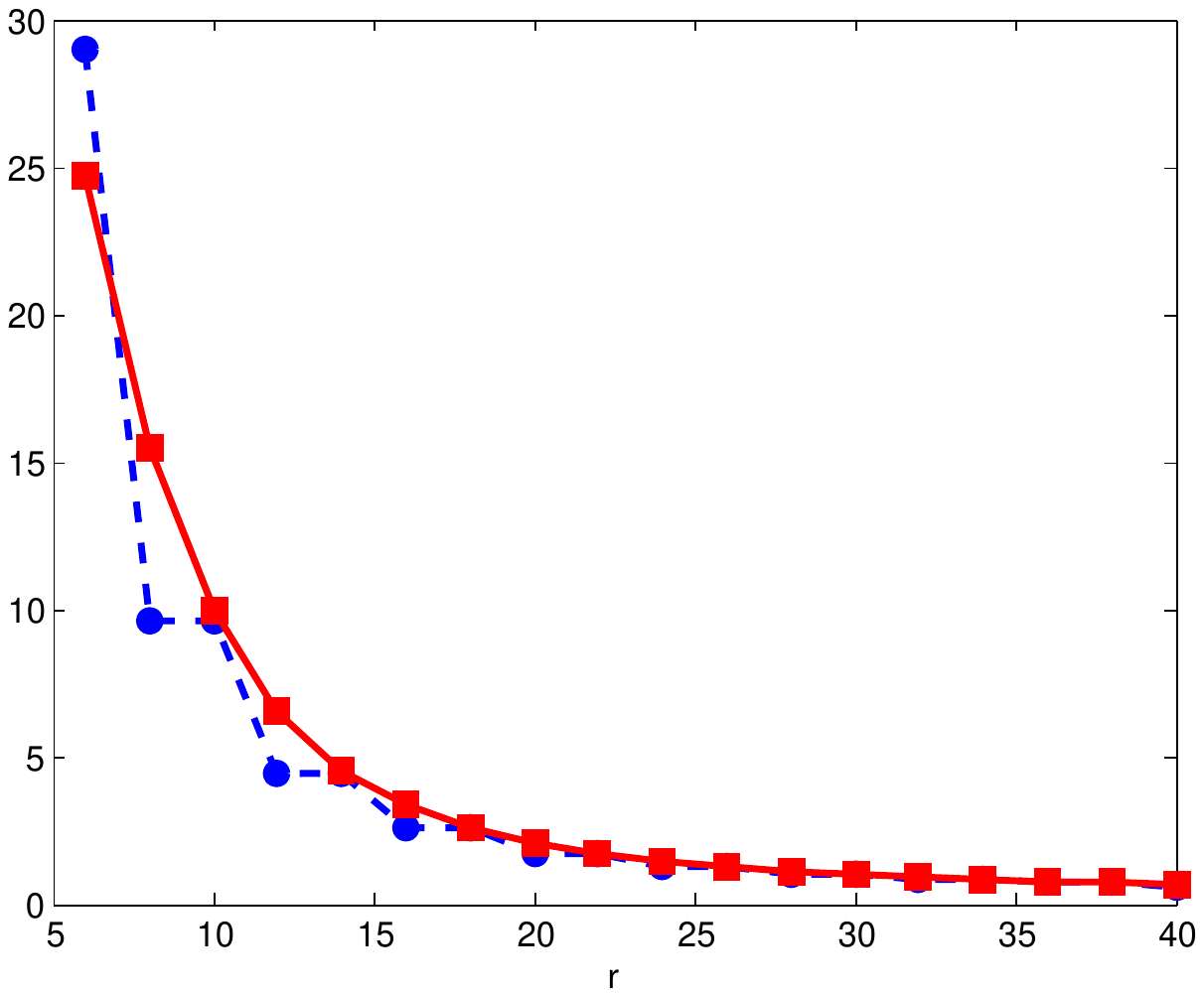}
	\caption{Three-Hump Camel function}
\end{subfigure}
\begin{subfigure}{0.4\textwidth}
	\includegraphics[width=\textwidth]{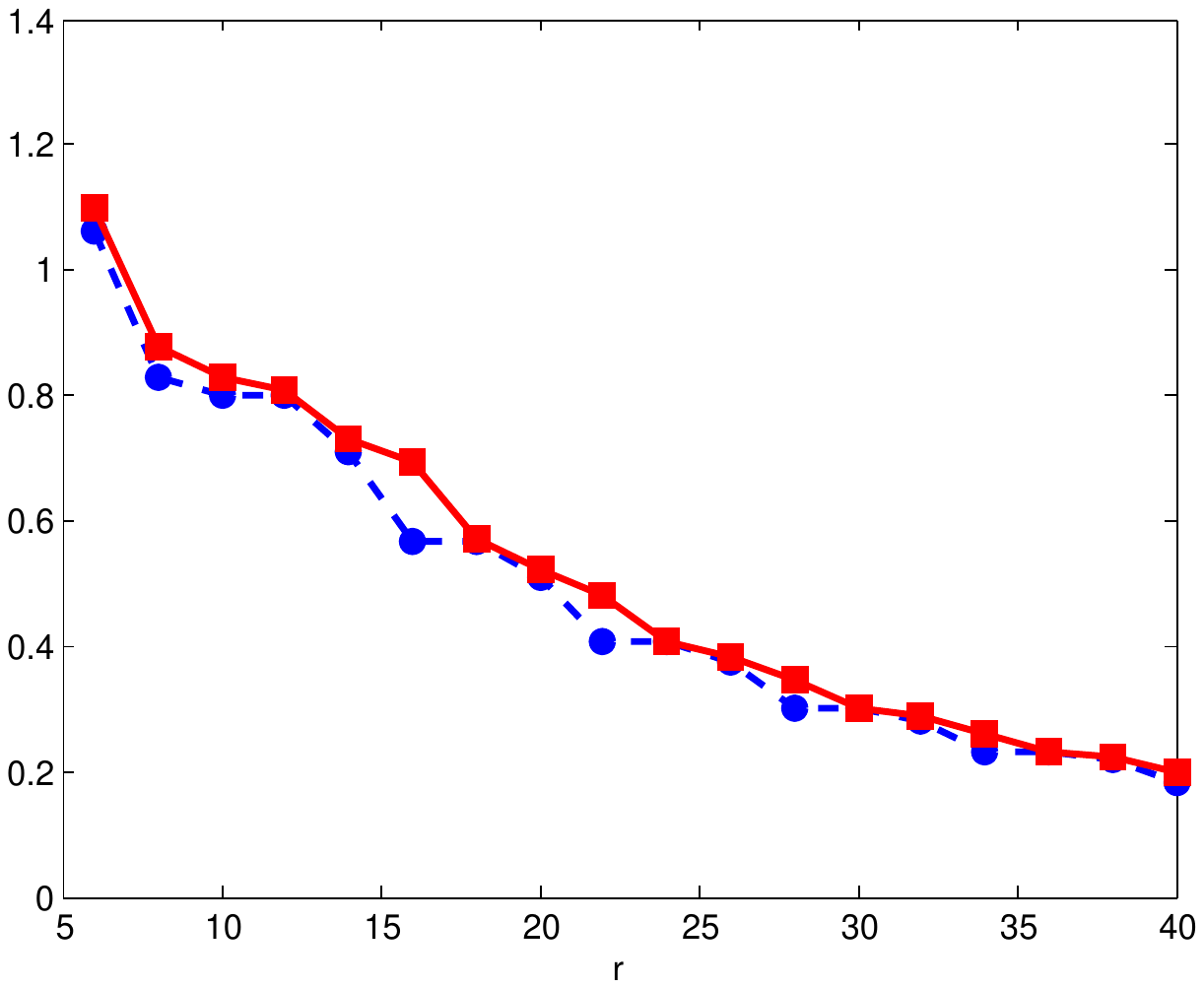}
	\caption{Motzkin polynomial}
\end{subfigure}
\vspace{.4cm}
  \caption{Graphical representation of Table \ref{table:result1} to illustrate the comparison of the upper bounds $\underline{f}_{\mathbf{K}}^{(r)}$ and $f^{(r)}$. The values $\underline{f}_{\mathbf{K}}^{(r)}$ are marked with circles connected by a dashed line and $f^{(r)}$ with squares connected by a solid line.} \label{fig:result1}
\end{center}
\end{figure}
 
 Finally, it is shown in \cite{ConvAna} that one may obtain  feasible points corresponding to bounds like $f^{(r)}$ through sampling from the probability distribution defined by the optimal density function. In particular, one may use the  {\em method of conditional distributions}  (see e.g., \cite[Section 8.5.1]{law07}).
 For $\mathbf{K} = [0,1]^n$, the procedure is described in detail in \cite[Section 3]{ConvAna}.

\section*{Appendix}

\subsection*{A. Proof of Lemma \ref{lemChebyshev}}

We give here a proof of Lemma \ref{lemChebyshev}, which we repeat for convenience.

\noindent\textbf{Lemma \ref{lemChebyshev}}
\textit{For any fixed integer $k > 1$,
one has
\begin{align*}
\max_{0\le i \le k-1} |u_i^{(k)}| \le \max_{0\le i \le k} |t_i^{(k)}| = 2^{k-1-2\psi(k)}\frac{k(k-\psi(k)-1)!}{\psi(k)!(k-2\psi(k))!},\tag*{\eqref{Cheb coeff bound}}
\end{align*}
where  $\psi(k) = 0$ for $k\leq 4$ and $\psi(k)=\left\lceil\frac{1}{8}\left(4k-5 - \sqrt{8k^2-7}\right)\right\rceil$ for $k\geq 4$. Moreover, the right-hand side of the equation increases monotonically with increasing $k$.}

\begin{proof}
We recall the representation of the Chebyshev polynomials in the monomial basis:
\begin{align*}
	T_k(x) &= \sum_{i=0}^k t_i^{(k)}x^i = \frac{k}{2}\sum_{m=0}^{\lfloor \frac{k}{2}\rfloor} (-1)^m\frac{(k-m-1)!}{m!(k-2m)!}(2x)^{k-2m}, &k>0,\\
	U_{k-1}(x) &= \sum_{i=0}^{k-1}u_i^{(k)}x^i = \sum_{m=0}^{\lfloor \frac{k-1}{2}\rfloor} (-1)^m\frac{(k-m-1)!}{m!(k-1-2m)!}(2x)^{k-1-2m}, &k>1.
\end{align*}
So, concretely, the coefficients are given by
\begin{align*}
	t_{k-2m}^{(k)} &= (-1)^m\cdot 2^{k-1-2m}\cdot \frac{k(k-m-1)!}{m!(k-2m)!},& k>0,\ 0\leq m \leq \left\lfloor \frac{k}{2}\right\rfloor,\\
	u_{k-1-2m}^{(k)} &= (-1)^m\cdot 2^{k-1-2m}\cdot \frac{(k-m-1)!}{m!(k-1-2m)!}, & k>1,\ 0 \leq m\leq \left\lfloor \frac{k-1}{2}\right\rfloor.
\end{align*}
It follows directly that $t_{k-2m}^{(k)} = \frac{k}{k-2m} u_{k-1-2m}^{(k)} $ and thus $|t_{k-2m}^{(k)}|> |u_{k-1-2m}^{(k)}|$ for $m<\frac{k}{2}$ and all $k>1$ which implies the inequality on the left-hand side of \eqref{Cheb coeff bound}.

Now we show that the value of $\max_{0\le m\le \left\lfloor \frac{k}{2}\right\rfloor} |t_{k-2m}^{(k)}|$ is attained for $m=\psi(k)$. 
 For this we examine the quotient
\begin{equation}\label{quotient}\tag{A.1}
	\frac{|t_{k-2(m+1)}^{(k)}|}{|t_{k-2m}^{(k)}|} 
		= \frac{(k-2m)(k-2m-1)}{4(m+1)(k-m-1)}\\
		= \frac{k^2-4mk+4m^2+2m-k}{4mk-4m^2-8m+4k-4}.
\end{equation}
Observe that this quotient is at most 1 if and only if  $m_1\le m \le m_2$, where we set
$m_1=\frac{1}{8}\left(4k-5 - \sqrt{8k^2-7}\right)$ and $m_{2} =\frac{1}{8}\left(4k-5 + \sqrt{8k^2-7}\right)$.
Hence the function  $m \mapsto |t_{k-2m}^{(k)}|$ is monotone increasing for $m \le  m_1$ and
monotone decreasing for $m_1 \le m\le m_2$.
Moreover, as $\lfloor m_1\rfloor \le m_1$, we deduce that $|t^{(k)}_{k-2\lceil m_1\rceil}|\ge |t^{(k)}_{k-2\lfloor m_1\rfloor }|$.
Observe furthermore that  $m_1\ge 0$ if and only if $k\ge 4$, and $m_2\ge \frac{k}{ 2}$ for all $k>1$.

Therefore, in the case $k\ge 4$, $\max_{0\le m\le \left\lfloor \frac{k}{2}\right\rfloor} |t_{k-2m}^{(k)}|$ is attained at  $\lceil m_1\rceil =  \psi(k),$
and thus it is equal to $|t^{(k)}_{k-2\psi(k)}|$.
In the case $1<k\leq4$,
$\max_{0\le m\le \left\lfloor \frac{k}{2}\right\rfloor} |t_{k-2m}^{(k)}|$ is attained at $m=0$, and thus it is equal to $|t^{(k)}_k|= 2^{k-1}.$



Finally we show  that the rightmost term of \eqref{Cheb coeff bound} increases monotonically with $k$. We show the inequality:
$|t^{(k)}_{k-2\psi(k)}| \le |t^{(k+1)}_{k+1-2\psi(k+1)}|$ for $k\ge 4$.
For this we consider again the sequence of Chebyshev coefficients, but this time we are interested in the behavior for increasing $k$, i.e., in the map $k \mapsto |t_{k-2m}^{(k)}|$. So, for fixed $m$, we consider the quotient
\begin{equation*}
	\frac{|t_{k+1-2m}^{(k+1)}|}{|t_{k-2m}^{(k)}|} = \frac{2^{k-2m}(k+1)(k-m)!\, m!\, (k-2m)!}{2^{k-1-2m}k(k-m-1)!\, m!\, (k+1-2m)!} = 2\cdot\frac{k+1}{k}\cdot\frac{k-m}{k+1-2m},
\end{equation*}
which is equal to 2 if $m=0$, and at least 1 if $m>0$  since every factor is at least 1. Thus, for $m=\psi(k)$,  we obtain
\begin{equation}\label{eqmono1}\tag{A.2}
|t^{(k)}_{k-2\psi(k)}| \le |t^{(k+1)}_{k+1-2\psi(k)}|. 
\end{equation}
Consider the map $\phi\colon [4,\infty) \to\mathbb{R},\ k\mapsto\phi(k)=
 \frac{1}{8}\left(4k-5 - \sqrt{8k^2-7}\right)$, so that $\psi(k)=\lceil \phi(k)\rceil$.
The map $\phi$  is monotone increasing, since its derivative $\phi'(k) = \frac{1}{8}\left(4-\frac{16k}{2\sqrt{8k^2-7}}\right) = \frac{\sqrt{8k^2-7}-2k}{2\sqrt{8k^2-7}}$ is positive for all $k\geq 4$. Hence, we have: $\psi(k)\le \psi(k+1)$.
Then, in view of \eqref{quotient} (and the comment thereafter), we have
$   |t^{(k+1)}_{k+1-2m}| \le |t^{(k+1)}_{k+1-2(m+1)}| \ \text{ if } m\le \psi(k+1),$ and thus
\begin{equation}\label{eqmono2}\tag{A.3}
  |t^{(k+1)}_{k+1-2\psi(k)}| \le |t^{(k+1)}_{k+1-2\psi(k+1)}|.
  \end{equation}
Combining (\ref{eqmono1}) and (\ref{eqmono2}), we obtain the desired inequality:
$|t^{(k)}_{k-2\psi(k)}| \le |t^{(k+1)}_{k+1-2\psi(k+1)}|.$
\qed\end{proof}

\subsection*{B. Useful identities for the Chebychev polynomials}

Recall the notation $d\mu(x)$ to denote the Lebesgue measure with the function
$ \prod_{i=1}^n\left(\pi \sqrt{1-x_i^2}\right)^{-1}$ as density function.
In order to compute the matrices $A^I$ and $B^I$ we need to evaluate the following integrals:
$$\langle T^\alpha, T^\beta T^\gamma \prod_{i\in I} (1-x_i^2)\rangle =
\prod_{i\in I} \int_{-1}^1 T_{\alpha_i}(x_i)T_{\beta_i}(x_i) T_{\gamma_i}(x_i) (1-x_i^2)d\mu(x_i) \cdot \prod_{i\not\in I}  \int_{-1}^1 T_{\alpha_i}(x_i)T_{\beta_i}(x_i) T_{\gamma_i}(x_i) d\mu(x_i). $$
Thus we can now assume that we are in the univariate case. Suppose  we are given integers $a,b,c\ge 0$ and the goal is to evaluate the integrals
\[
\int_{-1}^1 T_a(x) T_b(x) T_c(x)d\mu(x) \ \text{ and } \ \int_{-1}^1 T_a(x) T_b(x) T_c(x)(1-x^2)d\mu(x).
\]
  We use the following identities for the (univariate) Chebyshev polynomials:
  \[
T_aT_b=\tfrac{1}{ 2}(T_{a+b}+T_{|a-b|}),\ T_aT_bT_c =\tfrac{1}{ 4}( T_{a+b+c}+T_{|a+b-c|} +T_{|a-b|+c}+T_{||a-b|-c|}),
\]
so that
\begin{eqnarray*}
T_aT_bT_cT_2&=& \tfrac{1}{ 8}(T_{a+b+c+2} + T_{|a+b+c-2|} +T_{|a+b-c|+2} +T_{||a+b-c|-2|}  \\
&&+ T_{|a-b|+c+2}+T_{||a-b|+c-2|}+ T_{||a-b|-c|+2}+T_{|||a-b|-c|-2|}).
\end{eqnarray*}
Using the orthogonality relation $\int_{-1}^1 T_a d\mu(x) = \delta_{0,a}$, we obtain that
\[
\int_{-1}^1 T_aT_bT_cd\mu(x)= \tfrac{1}{ 4} (\delta_{0,a+b+c} +\delta_{0, a+b-c} +\delta_{0, |a-b|+c} +\delta_{0,|a-b|-c}).
\]
Moreover,
using the fact that $1-x^2=(1-T_2)/2$,
we get
\[
\int_{-1}^1 T_aT_bT_c(1-x^2)d\mu(x) = \frac{1}{ 2} \int_{-1}^1 T_aT_bT_c(1-T_2)d\mu(x)
=\frac{1}{ 2} \int_{-1}^1 T_aT_bT_c d\mu(x) - \frac{1}{ 2} \int_{-1}^1 T_aT_bT_cT_2d\mu(x),
\]
and thus
\begin{eqnarray*}
\int_{-1}^1 T_aT_bT_c(1-x^2)d\mu(x) &=& \tfrac{1}{ 8}(\delta_{0,a+b+c} +\delta_{0, a+b-c} +\delta_{0, |a-b|+c} +\delta_{0,|a-b|-c}) \\
&&
- \tfrac{1}{ 16}(\delta_{0,a+b+c-2}+\delta_{0,|a+b-c|-2} +\delta_{0, |a-b|+c-2}
+\delta_{0, ||a-b|-c|-2} ).
\end{eqnarray*}

\subsection*{C. Test functions}
\begin{description}
	\item[Booth function] $n=2$,
		$f_{\min} = f(0.1,0.3) = 0$, $f([-1,1]^2) \approx [0,2\, 500]$
		\begin{multline*}		
				f(x) = (10x_1+20x_2-7)^2 + (20x_1+10x_2-5)^2\\
				= 250(T_2(x_1)+T_2(x_2))+800\, T_1(x_1)T_1(x_2)-340\, T_1(x_1)-380\, T_1(x_2)+574.
		\end{multline*}
	\item[Matyas function] $n=2$,
		$f_{\min} = f(0,0) = 0$, $f([-1,1]^2) \approx [0,100]$
		\begin{equation*}				
				f(x) = 26(x_1^2+x_2^2)-48x_1x_2
				= 13(T_2(x_1)+T_2(x_2)) - 48T_1(x_1)T_1(x_2) + 26.
		\end{equation*}
	\item[Motzkin polynomial] $n=2$,
		$f_{\min} = f(\pm \tfrac{1}{2},\pm \tfrac{1}{2}) = 0$, $f([-1,1]^2) \approx [0,80]$
		\begin{multline*}
				f(x) = 64(x_1^4x_2^2+x_1^2x_2^4) - 48x_1^2x_2^2 +1
				= 4(T_4(x_1)+T_4(x_1)T_2(x_2)\\+T_2(x_1)T_4(x_2)+T_4(x_2)) + 20\,T_2(x_1)T_2(x_2) + 16\,(T_2(x_1)+T_2(x_2)) + 13.
		\end{multline*}
	\item[Three-Hump Camel function] $n=2$,
		$f_{\min} = f(0,0)= 0$, $f([-1,1]^2) \approx [0,2\, 000]$
		\begin{multline*}				
				f(x) = \frac{5^6}{6}x_1^6-5^4\cdot 1.05x_1^4+50x_1^2+25x_1x_2+25x_2^2\\
				= \frac{5^6}{192}\, T_6(x_1) +\frac{1625}{4}\, T_4(x_1) + \tfrac{58725}{64}\, T_2(x_1) +25\, T_1(x_1)T_1(x_2) + 12.5\, T_2(x_2) + \tfrac{14525}{24}.
		\end{multline*}
	\item[Styblinski-Tang function] $n = 2,\, 3$, $f_{\min} = -39.17\cdot n$,$f([-1,1]^2 \approx [-70,200]$
		\begin{equation*}	
				f(x) = \sum_{j=1}^n 312.5x_j^4-200x_j^2+12.5x_j
				=  \sum_{j=1}^n \left(\frac{625}{16}\, T_4(x_j) + \frac{225}{4}\, T_2(x_j) + \frac{25}{2}\, T_1(x_j) + \frac{275}{16}\right).
		\end{equation*}
	\item[Rosenbrock function]
		$n = 2,\ 3$, $f_{\min} = 0$, $f([-1,1]^2) \approx [0,4\, 000]$
		\begin{multline*}
				f(x) = \sum_{j=1}^{n-1} 100(2.048\cdot x_{j+1}-2.048^2\cdot x_j^2)^2 + (2.048\cdot x_j-1)^2\\
				= \sum_{j=1}^{n-1} \left[ 12.5\cdot 2.048^4\, T_4(x_j) - 100\cdot 2.048^3\, T_2(x_j)T_1(x_{j+1})
				+ (0.5 + 50\cdot 2.048^2) 2.048^2\, T_2(x_j)\right.\\
				\left. + 50\cdot 2.048^2\, T_2(x_{j+1}) -4.096\, T_1(x_j) - 100\cdot 2.048^3 \, T_1(x_{j+1})+1 +2.048^2(37.5\cdot 2.048^2+50.5)\right].
		\end{multline*}
\end{description}


\begin{thebibliography}{10}

\bibitem{Bertsekas1996}
D.P. Bertsekas,
\emph{Constrained Optimization and Lagrange Multiplier Methods}, Athena Scientific, Belmont, MA (1996)

\bibitem{AS}
M. Abramowitz, I.A. Stegun (eds.).
\textit{Handbook of Mathematical Functions with formulas, graphs, and mathematical tables,} 10th ed.,
Applied Mathematics Series 55, New York (1972)

\bibitem{ElemCalc}
E.\ de Klerk, J.B. Lasserre, M. Laurent, Z. Sun.
Bound-constrained polynomial optimization using only elementary calculations,
\textit{arxiv: 1507.04404} (2015)

\bibitem{ConvAna}
E. de Klerk, M. Laurent, Z. Sun.
Convergence analysis for Lasserre's measure-based hierarchy of upper bounds for polynomial optimization,
\textit{arXiv: 1411.6867} (2014)

\bibitem{Fletcher_book}
R. Fletcher.
{\em Practical Methods of Optimization}, 2nd ed., John Wiley \& Sons, Inc., New York (1987)

\bibitem{Gill_Murray_Wright}
P.E. Gill, W. Murray,  M.H. Wright.
{\em Practical Optimization}, Academic Press, New York (1981)

\bibitem{Hager_Zhang}
W.W. Hager and H. Zhang.
A new active set algorithm for box constrained optimization.
{\em SIAM Journal on  Optimization} 17(2), 526--557 (2006)

\bibitem{Hungerlander_Rendl}
P. Hungerl\"ander and F. Rendl.
A feasible active set method for strictly convex quadratic problems with simple bounds. {\em SIAM Journal on Optimization} 25(3), 1633--1659 (2015).

\bibitem{Las01}
J.B. Lasserre.
Global optimization with polynomials and the problem of moments,
{\em SIAM Journal on Optimization}
{11(3), 796--817} (2001)

\bibitem{Las11}
J.B.\ Lasserre.
A new look at nonnegativity on closed sets and polynomial optimization.  {\em SIAM
 Journal on Optimization} 21(3), 864--885 (2011)

\bibitem{sos}
M. Laurent.
Sums of squares, moment matrices and optimization over polynomials,
\textit{in Emerging Applications of Algebraic Geometry}, Vol. 149 of IMA Volumes in Mathematics and its Applications, M. Putinar and S. Sullivant (eds.), Springer, pages 157-270 (2009)

\bibitem{law07}
A.M.\ Law.
{\em Simulation Modeling and Analysis}, 4th ed.,
Mc Graw-Hill (2007)

\bibitem{Mar}
M.  Marshall.
{\em Positive Polynomials and Sums of Squares},
 Mathematical Surveys and
Monographs 146, American Mathematical Society
(2008)

\bibitem{Palthesis}
L. P\'al.
Global optimization algorithms for bound constrained problems. PhD thesis, University of Szeged (2010) Available at \url{http://www2.sci.u-szeged.hu/fokozatok/PDF/Pal_Laszlo/Diszertacio_PalLaszlo.pdf}

\bibitem{PH11}
M.-J. Park, S.-P. Hong.
Rank of Handelman hierarchy for Max-Cut. {\em Operations Research Letters} 39(5), 323--328 (2011)

\bibitem{PH13}
M.-J. Park, S.-P. Hong.
Handelman rank of zero-diagonal quadratic programs over a hypercube and its applications.
{\em Journal of Global Optimization} 56(2),  727--736 (2013)

\bibitem{PR}
V. Powers and B. Reznick.
Polynomials that are positive on an interval.
{\em Trans. Amer. Math. Soc.} 352, 4677--4692 (2000)

\bibitem{DP}
A. Prestel, C.N. Delzell.
{\em Positive Polynomials - From Hilbert's 17th Problem to Real Algebra}, Springer Monographs in Mathematics,
Springer (2001)

\bibitem{Rivlin}
T.J. Rivlin.
\textit{Chebyshev polynomials: From Approximation Theory to Algebra and Number Theory}, 2nd ed., Pure and Applied Mathematics, John Wiley \& Sons, New York (1990)

\bibitem{Sch91}
K. Schm\"udgen.
The $K$-moment problem for compact semi-algebraic sets.
{\em Mathematische Annalen}
289, 203--206 (1991)

\bibitem{KPM}
A. Weisse, G. Wellein, A. Alvermann, H. Fehske.
The kernel polynomial method,
Rev. Mod. Phys. 78, 275--306 (2006). Preprint version: \url{http://arxiv.org/abs/cond-mat/0504627}

\end{thebibliography}
\end{document}